\newcommand{\rr}{{\mathbb R}}
\newcommand{\nn}{{\mathbb N}}
\newcommand{\Sp}{{\mathbb S}}
\newcommand{\eps}{\varepsilon}
\newcommand{\set}[1]{\left\{#1\right\}}
\newcommand{\pare}[1]{\left(#1\right)}
\newcommand{\abs}[1]{\left|#1\right|}
\newcommand{\norm}[1]{\abs{\abs{#1}}}
\newcommand{\A}{{\mathcal A}}
\newcommand{\pI}[1]{\left <{#1}\right >}
\newcommand\restr[2]{{
  \left.\kern-\nulldelimiterspace 
  #1 
  \vphantom{\big|} 
  \right|_{#2} 
  }}
 \newtheorem{theorem}{Theorem}[section]
\newtheorem{lemma}[theorem]{Lemma}
\newtheorem{proposition}[theorem]{Proposition}
\newtheorem{corollary}[theorem]{Corollary}
\newtheorem{definition}[theorem]{Definition}
\newtheorem{notation}[theorem]{Notation}
\newtheorem{remark}[theorem]{Remark}
\newtheorem{example}[theorem]{Example}
\begin{document}

\title[Translating soliton for $Q_k$-flows]{Interior estimates for Translating solitons of the $Q_k$-flows in $\rr^{n+1}$}
\author[J. Torres Santaella]{Jos\'e  Torres Santaella} 
\address{Departamento de Matemáticas\\ Pontificia Universidad Católica de Chile\\ Santiago\\ Chile}
\email{jgtorre1@uc.cl}

\maketitle

\begin{abstract}
 We prove interior gradient estimate and second order estimate for the $Q_k$-flow and $Q_k$-translators in $\rr^{n+1}$. In addition, we show that $Q_k$-translator which are asymptotic to $o(|x|)$ cannot exist. 
\end{abstract}

\section{Introduction}
Geometric evolution problems for hypersurfaces have had a remarkable development over the last decades, since this kind of problems lead to interesting non-linear PDE's that have been used to solve important open questions in mathematics and physics. 
\newline
In this paper, we are interested in a particular class of extrinsic flows, where the speed of the flow is given by a $1$-homogeneous  function in the principal curvatures of the given initial hypersurface. 
\newline
More precisely, given a manifold $M^n$ and an immersion $F_0:M^n\to\rr^{n+1}$, one wants to find a $1$-parameter family of immersions $F:M\times[0,T)\to\rr^{n+1}$ such that 
\begin{align}\label{I.2}
\begin{cases}
    \left(\dfrac{\partial F}{\partial t}\right)^{\perp}&=Q_k(\lambda),\mbox{ in }M\times(0,T),
    \\
    F(\cdot,0)&=F_0(\cdot),
    \end{cases}
\end{align}
where $(\cdot)^{\perp}$ means the orthogonal projection onto the normal bundle of $M$, 
\begin{align*}
Q_k=\dfrac{S_{k+1}}{S_k},
\end{align*}
and $S_k(\lambda)$ denotes the elementary symmetric polynomial in $n$ variables of degree $k$ evaluated at the principal curvatures of $M_t=F(M,t)$, i.e:
\begin{align}\label{I.1}
    S_{k+1}(\lambda)=\sum_{1\leq i_1<i_2<\ldots i_{k+1}\leq n}\lambda_{i_{1}}\ldots\lambda_{i_{k+1}}.
\end{align}
Note that by definition $S_0=1$ and $Q_0=H$ is the scalar mean curvature, which has been widely studied in this century see \cite{MR3312634} for an introduction survey. In addition, these type of functions are the simplest case of geometric quotients of curvature $1$-homogeneous functions.

From the PDE perspective, Equation \ref{I.2} is locally uniformly parabolic when the principal curvature vector $\lambda$ lies in the cone
\begin{align*}
\Gamma_{k+1}=\set{\lambda\in\rr^{n}:S_l(\lambda)>0,\:l=0,\ldots,k+1}.
\end{align*}
In particular, this fact motivate the study of the $Q_k$-flow in the following papers \cite{andrews_2004}, \cite{dieter_2005} and \cite{Choi-Daskalopoulos_2016}, which we describe below. In \cite{andrews_2004},
the author shows classical results about existence, uniqueness and collapsing types theorem for closed strictly convex initial hypersurfaces.
\newline
A year later, in \cite{dieter_2005}, the author uses a weaker condition on the convexity of the initial hypersurface to show similar results as in \cite{andrews_2004}. Namely, one can start the $Q_k$-flow when $\lambda\in\Gamma_k\cap\set{\lambda\geq0}$ instead of being strictly convex. This means that we can permit that $Q_k$ vanish at some points in $M_0$.  
\newline
Finally, in \cite{Choi-Daskalopoulos_2016}, the authors show an existence result of non-compact complete convex solutions in the spirit of \cite{ecker_huisken_1989} for the Mean Curvature Flow.
\newline

In this paper we focus on eternal solutions\footnote{Solution which exists for all time.} of the $Q_k$-flow which evolve by translation, usually known as translating solitons, or $Q_k$-translators for short. These type of solutions are immersion of the form 
\begin{align*}
F(x,t)=F_0(x)+tv,
\end{align*}
where $v\in\Sp^n$ is a fixed direction. Since we are interested in the normal part, $Q_k$-translators can also be seen as hypersurfaces $M_0\subset\rr^{n+1}$ which satisfy,
\begin{align}\label{I.3}
Q_k(\lambda)=\pI{\nu,v}.
\end{align} 
These type of solitons have been studied by many authors in the case $k=0$, we recommend the reader to see the survey \cite{Hoffman_Martin_white_2019} as a guide in this area. 
\newline
The interest on these examples arises mainly due to two reasons: they appear as a model for type $2$ singularities of the Mean Curvature Flow \cite{huisken-sinestrari_1999} and  they are also minimal surfaces in weighted Euclidean spaces \cite{ilmanen_1994}.
\newline

In this paper, we obtain several results for the $Q_k$-flow and $Q_k$-translators in $\rr^{n+1}$. In the remains of this section, we only summarize the results obtained for $Q_k$-translators, since the statement for the $Q_k$-flow are similar. 
\newline
Firstly, by adapting the gradient estimates founded in \cite{sheng_urbas_wang_2004} into our settings, we obtain a local gradient estimate given below.
\begin{theorem}
Let $r>0$ and $u\in\mathcal{C}^3\left(B(0,r)\right)$ such that $\mbox{graph}(u)$ is a $Q_k$-translator. Then it holds 
\begin{align*}
    |Du(0)|\leq\exp\left(\dfrac{CM}{r}+\dfrac{CM^2}{r^2}\right),
\end{align*}
where $M=\sup_{B(0,r)}u$ and $C=C(k,n)$. 
\end{theorem}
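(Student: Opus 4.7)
The plan is to adapt the Sheng--Urbas--Wang maximum principle scheme of \cite{sheng_urbas_wang_2004} to the translator equation. First I would write \eqref{I.3} non-parametrically: assuming without loss of generality that the translation direction is $v = e_{n+1}$ and orienting $\mathrm{graph}(u)$ so that $\nu = (-Du, 1)/\sqrt{1+|Du|^2}$, equation \eqref{I.3} becomes
\begin{equation*}
Q_k(\lambda[u]) \;=\; \frac{1}{\sqrt{1+|Du|^2}},
\end{equation*}
where $\lambda[u]$ are the principal curvatures of the graph. The linearized operator $L = F^{ij}\partial_i\partial_j$, with $F^{ij} = \partial Q_k/\partial a_{ij}$ evaluated at the Weingarten map, is elliptic because $\lambda \in \Gamma_{k+1}$. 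Note that the right-hand side is bounded between $0$ and $1$, so it will act essentially as a lower-order perturbation of the corresponding equation studied in \cite{sheng_urbas_wang_2004}.

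The second step is to introduce an auxiliary test function of the form
\begin{equation*}
W(x) \;=\; \log\sqrt{1+|Du(x)|^2} \;+\; \beta\,(M - u(x)) \;+\; \gamma\,\log\!\frac{r^2}{r^2 - |x|^2},
\end{equation*}
with $\beta,\gamma > 0$ to be fixed in terms of $n$, $k$, $M$, $r$. The last barrier term diverges on $\partial B(0,r)$, so the maximum of $W$ on $\overline{B(0,r)}$ is attained at some interior point $x_0$, and the critical conditions $DW(x_0) = 0$ and $D^2W(x_0) \leq 0$ hold there. Applying $L$ at $x_0$ then gives $L(W)(x_0) \leq 0$, and the task becomes extracting a pointwise bound on $v(x_0) := \sqrt{1+|Du(x_0)|^2}$ from this inequality.

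The third step is the computation itself. Differentiating the equation once and twice and using the homogeneity identity $F^{ij}h_{ij} = Q_k$, the concavity of $Q_k$ on $\Gamma_{k+1}$ (which makes the Codazzi-type third-order contribution $F^{ij,kl}\,h_{ij;m}h_{kl;m}$ non-positive), and the lower bound $\sum_i F^{ii} \geq c(n,k) > 0$, one rewrites $L(\log v)$ in a manageable form. Plugging into $L(W)(x_0) \leq 0$ and using that $L$ applied to the two barrier terms contributes quantities controlled respectively by $\beta\sum_i F^{ii}$ and $\gamma/(r^2-|x_0|^2)^2\sum_i F^{ii}$, one arrives at an inequality of the schematic form
\begin{equation*}
\log v(x_0) \;\leq\; C\,\beta\, M \;+\; \frac{C\,\gamma^2}{(r^2-|x_0|^2)^2}\cdot(r^2-|x_0|^2) \;+\; (\text{lower order}).
\end{equation*}
Choosing $\beta \sim 1/r$ and $\gamma \sim 1$, and using $W(0) \leq W(x_0)$ together with $|\log(r^2/(r^2-|x|^2))| = 0$ at $x=0$, one obtains $\log v(0) \leq CM/r + CM^2/r^2$, which is the claimed bound.

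The principal obstacle will be the absorption of the third-order derivative terms produced by differentiating $\log v$ twice through the equation: they must be controlled using the gradient identity $DW(x_0) = 0$ (which lets one trade gradient contractions of $v$ for barrier terms) together with the concavity of $Q_k$ in the direction of the covariant derivatives of the second fundamental form. Relative to the elliptic case of \cite{sheng_urbas_wang_2004}, the translator modification is structurally mild, since $\langle \nu, v\rangle$ depends only on $Du$ and its $x$-derivatives contribute terms of order $v$ that are absorbed by choosing $\beta$ sufficiently large depending on $n$ and $k$. The bookkeeping of all these cancellations, in particular tracking how the concavity of the quotient $S_{k+1}/S_k$ (more delicate than that of $S_k$ alone) interacts with the translator right-hand side, is the main calculation I expect to carry out.
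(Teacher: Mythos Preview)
Your strategy is in the same family as the paper's (maximum principle on a barrier-weighted gradient quantity, in the spirit of Sheng--Urbas--Wang), but your execution has gaps. Two of the tools you cite are miscast. First, the ``Codazzi-type third-order contribution $F^{ij,kl}h_{ij;m}h_{kl;m}$'' does not arise in a gradient estimate: such Hessian-of-$F$ terms appear when the PDE is differentiated \emph{twice} (as in the $C^2$ estimates of Section~\ref{sec4}), whereas here one differentiates only once, producing $F^{ij}u_{ij1}$ and no second derivatives of $F$. Concavity of $Q_k$ is used in the paper, but only at one isolated spot (to discard an off-diagonal term via a first-order Taylor expansion of $Q_k$), not to control third-order terms globally. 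Second, $L(\beta(M-u))$ does not contribute ``quantities controlled by $\beta\sum_iF^{ii}$'': by $1$-homogeneity of $Q_k$ in $A$, linearity of $A$ in $D^2u$, and the translator equation, one has $F^{ij}u_{ij}=Q_k=w^{-1}$, so $L(\beta(M-u))=-\beta w^{-1}$, a bounded term carrying no $\sum_iF^{ii}$ factor and unable to absorb the bad contributions.

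What actually closes the estimate---and what your sketch does not identify---is the algebraic input of Lemma~\ref{L 3.1}. The paper uses the multiplicative test function $G=\rho(x)\varphi(u)\ln u_\xi$ with $\rho=r^2-|x|^2$, $\varphi=1+u/M$, and maximizes over $x$ and over directions $\xi\in\Sp^{n-1}$ (not over the full $v=\sqrt{1+|Du|^2}$ you propose). At the interior maximum, after rotating so that $Du=|Du|e_1$ and block-diagonalizing $(u_{ij})_{i,j\ge2}$, the first-order condition forces $u_{11}<0$, hence $A_{11}<0$. Lemma~\ref{L 3.1} then yields
\[
\frac{\partial Q_k}{\partial A_{11}}\;\ge\;\frac{n}{(n-k)^2(k+1)}\sum_{i}\frac{\partial Q_k}{\partial A_{ii}},
\]
a comparison much sharper than ``$\sum_iF^{ii}\ge c$'' or concavity alone. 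This is what converts the $F^{11}u_{11}^2$ term---which the first-order condition already makes of order $F^{11}\cdot u_1^2(\ln u_1)/M^2$---into the dominant good term that beats every bad term carrying $\sum_i\partial Q_k/\partial A_{ii}$. Without this lemma (or an equivalent substitute) the argument does not close, and nothing in your outline plays its role.
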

Moreover, by assuming $k\geq 1$, any hypersurface $M$ with its principal curvatures lying in $\Gamma_{k+1}$, satisfies  $|\A|^2\leq H^2$ .Therefore, by applying a similar proof of second order estimate given in \cite{ecker_huisken_1991},  we obtain second order estimates for solution in both settings
\begin{theorem}
Let $p\in M$ where $M$ is a $Q_k$-translator with $k\geq 1$, then $|A|^2$ is uniformly bounded in a $B(p,R)$.
\end{theorem}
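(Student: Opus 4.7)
The plan is to mimic the Ecker--Huisken second-order estimate for translators of the mean curvature flow, using the linearized operator $L = Q_k^{ij}\nabla_i\nabla_j$, with $Q_k^{ij} = \partial Q_k/\partial h_{ij}$. On the cone $\Gamma_{k+1}$ this operator is elliptic, and concavity of $Q_k^{1/(k+1)}$ via the Newton--MacLaurin inequalities will supply the right sign in the Simons-type identity below.

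The first step is to record the pointwise identities. From the translator equation $Q_k=\pI{\nu,v}$ together with the standard variation formulas for $\nu$ on $M$, one obtains the elliptic Jacobi-type identity $L\pI{\nu,v} = -Q_k^{ij}h_i{}^\ell h_{\ell j}\,\pI{\nu,v}$, while Euler's homogeneity gives $Q_k^{ij}h_{ij} = \pI{\nu,v}$. A Simons-type computation applied to $|\A|^2$ then yields, after using Codazzi and discarding the nonnegative Hessian-of-$A$ term coming from concavity,
\begin{align*}
L|\A|^2 \geq -C(n,k)\,|\A|^2\operatorname{tr}\bigl(Q_k^{ij}h_i{}^\ell h_{\ell j}\bigr) - (\text{lower order in }\nabla|\A|^2).
\end{align*}
The pinching $|\A|^2\leq H^2$ on $\Gamma_{k+1}$ for $k\geq 1$ enters here: it allows bounding $\operatorname{tr}(Q_k^{ij}h_i{}^\ell h_{\ell j})$ by a multiple of $|\A|^2\cdot\operatorname{tr}(Q_k^{ij}g_{ij})$, converting the bad quadratic-in-curvature weight into a pure $|\A|^4$ contribution, which is of the leading order of the test function below.

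Then I would apply the maximum principle to
\begin{align*}
\phi = \eta^2 \,\frac{|\A|^2}{\pI{\nu,v}^{\alpha}},
\end{align*}
where $\eta$ is a spatial cutoff supported in $B(p,R)\cap M$ built from $|F-p|^2$ and $\alpha>0$ is to be chosen. After writing $M$ locally as the graph of some $u$ over the tangent hyperplane at $p$, Theorem 1 supplies a uniform bound $|Du|\leq C$, hence a lower bound $\pI{\nu,v}\geq c_0(n,k,R)>0$ in the relevant region. At an interior maximum $q$ of $\phi$, the conditions $\nabla\phi(q)=0$ and $L\phi(q)\leq 0$, combined with the two identities above for $\alpha$ sufficiently large, produce an algebraic inequality of the form
\begin{align*}
\eta^2(q)\,|\A|^4(q) \leq C(n,k)\bigl(R^{-2}+c_0^{-2\alpha}\bigr),
\end{align*}
so that evaluating the max bound at the center of the cutoff yields the claimed estimate on $|\A|^2$ in $B(p,R/2)$.

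The main obstacle is the Simons-type computation for $L|\A|^2$: because $L$ carries the curvature-dependent weights $Q_k^{ij}$, the commutator $[L,\nabla]$ produces cubic-in-curvature error terms which are absent in the Laplacian case, and must be handled by exploiting the concavity of $Q_k^{1/(k+1)}$ together with Codazzi symmetry. A secondary technical point is the calibration of the exponent $\alpha$: it must be large enough to beat the curvature term produced by $L\pI{\nu,v}$ in the expansion of $L\phi$, yet small enough that the cutoff-gradient contribution remains controlled by $R^{-2}$. The pinching $|\A|^2\leq H^2$ plays a decisive role in the absorption step; without it, i.e.\ for $k=0$, the argument in this form collapses, which clarifies the role of the hypothesis $k\geq 1$.
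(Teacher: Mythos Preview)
Your proposal has a genuine gap in the Simons step. When you compute $L|\A|^2 = 2h^{ij}\,\square_k h_{ij} + 2Q_k^{ab}\nabla_a h_{ij}\nabla_b h^{ij}$ and substitute the evolution of $h_{ij}$, the concavity contribution is
\[
-2\,h^{ij}\,\dfrac{\partial^2 Q_k}{\partial h_{ab}\partial h_{cd}}\,\nabla_i h_{ab}\,\nabla_j h_{cd}.
\]
Concavity of $Q_k$ only tells you that, for each fixed $i$, the diagonal term $\dfrac{\partial^2 Q_k}{\partial h_{ab}\partial h_{cd}}\nabla_i h_{ab}\nabla_i h_{cd}\le 0$. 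After contraction with $h^{ij}$ in an eigenframe this becomes $\sum_i \lambda_i\,(\text{nonpositive})$, and on $\Gamma_{k+1}$ with $k\ge 1$ the principal curvatures $\lambda_i$ are \emph{not} all nonnegative (e.g.\ $(3,3,-1)\in\Gamma_2$ in dimension~3). So this term has no sign and cannot be ``discarded''; your inequality for $L|\A|^2$ is not justified, and the cubic error you flag as the main obstacle is in fact fatal for this test function.

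The paper sidesteps exactly this issue by running the argument on $H$ rather than on $|\A|^2$. Tracing the Simons identity gives
\[
\square_k H + \sum_i \dfrac{\partial^2 Q_k}{\partial h_{ab}\partial h_{cd}}\nabla_i h_{ab}\nabla_i h_{cd} + |\A|_k^2 H + \pI{\nabla H,\eps_{n+1}} = 0,
\]
where now the concavity term is a genuine sum of nonpositive quantities and can be dropped with the correct sign. The test function is $G = H^2\,\varphi(v^2)\,\eta$ with the specific Ecker--Huisken choice $\varphi(x)=x/(1-ax)$ and $v=\pI{\nu,w}^{-1}$, not a power $\pI{\nu,v}^{-\alpha}$; this choice is what makes the cross-gradient terms cancel. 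The pinching $|\A|^2\le H^2$ enters only through the absorption $|\A|_k^2\le (n-k)H^2$, and then once more at the very end to pass from the $H^2$ bound back to $|\A|^2$. The gradient estimate is used, as you say, to bound $v$ on the ball.
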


The organization of this papers goes as follows: In Section \ref{section:Properties} we deal with properties for the $Q_k$ functions that we will use along the paper. In Section \ref{sec3} we proves gradient estimates for graph solutions to the $Q_k$-flow and the $Q_k$-translator equation. We also prove a non-existences theorem for linear growth solutions of the graph $Q_k$-translator equation. In Section \ref{sec4} we prove second order estimate in the above setting.
\vskip 3mm
\noindent {\bf Acknowledgments:} The author would like to thank F. Martín and M. S\'aez for bringing this problem to his attention and for all the support they have provided. Furthermore, he would like to thank Pontificia Universidad Católica de Chile and Universidad de Granada for their facilities, seminars and doctoral schools on Differential Geometry in which he participated.
\section{Properties of $Q_k$ functions}\label{section:Properties}

In this section we list some properties of the $S_k$ and $Q_k$ functions in $\rr^n$.

\begin{definition}
By setting $S_0:=1$, $S_k:=0$ for $k>n$ and $S_{k}$ as in the formula (\ref{I.1}) for $k=1,\ldots,n$. We define the open convex cones
 \begin{align*}
    \Gamma_k:=\set{\lambda\in\rr^n:\:S_i(\lambda)>0,\mbox{ for }i=1,\ldots,k}.
\end{align*}
\end{definition}

\begin{example}
The most common examples of $Q_k(\mu)$ functions corespond to
\begin{align*}
Q_0(\lambda)=\lambda_1+\ldots+\lambda_n\mbox{  and }Q_{n-1}(\lambda)=\left(\dfrac{1}{\lambda_1}+\ldots+\dfrac{1}{\lambda_n}\right)^{-1},
\end{align*}
when we consider these expressions in (\ref{I.2}) we have the Mean and the Inverse Mean Harmonic Curvature Flow, respectively.  
\end{example}

\begin{notation}
For now on we will denote by $S_{k,i}(\lambda)$ and $S_{k,i;j}(\lambda)$ the sum of all terms of $S_k(\lambda)$ which do not contain the factor $\lambda_i$ and the factors $\lambda_i$ and $\lambda_j$, respectively.
\end{notation}

\begin{lemma}\label{L 2.4}
For any $k\in\set{0,\ldots,n}$, $i\in\set{1,\dots,n}$ and $\lambda\in\rr^{n}$ we have
\begin{align}
\dfrac{\partial S_{k+1}}{\partial \lambda_i}&=S_{k, i},\label{P 2.4.1}
\\
S_{k+1}&=S_{k+1,i}+\lambda_iS_{k,i},\label{P 2.4.2}
\\
\sum_{i=1}^n S_{k,i}&=(n-k)S_k,\label{P 2.4.3}
\\
\sum_{i=1}^n \lambda_iS_{k,i}&=(k+1)S_{k+1},\label{P 2.4.4}
\\
\sum_{i=1}^{n}\lambda_i^2S_{k,i}&=S_1S_{k+1}-(k+2)S_{k+2},\label{P 2.4.5}
\\ \label{P 2.4.6}
\sum_{i=1}^nS_{k,i;j}&=(n-k-1)S_{k,j}.
\end{align}
\end{lemma}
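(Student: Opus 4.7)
The plan is to read all six identities off the defining formula for $S_{k+1}$ using only elementary bookkeeping plus one application of Euler's homogeneity relation.

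First, $(\ref{P 2.4.1})$ is the definition. Expand $S_{k+1}$ as a sum of monomials $\lambda_{i_1}\cdots\lambda_{i_{k+1}}$ over strictly increasing index tuples; differentiation in $\lambda_i$ annihilates the monomials not containing $\lambda_i$ and strips the factor $\lambda_i$ from the rest, leaving exactly $S_{k,i}$. Identity $(\ref{P 2.4.2})$ is the complementary grouping of those same monomials: the $\lambda_i$-free ones sum to $S_{k+1,i}$, and the $\lambda_i$-containing ones sum to $\lambda_i S_{k,i}$. For $(\ref{P 2.4.3})$ I would argue by double counting: each monomial of $S_k$ is indexed by a $k$-subset $I\subset\{1,\ldots,n\}$ and appears in $S_{k,i}$ precisely for the $n-k$ indices $i\notin I$; summing over $i$ therefore multiplies $S_k$ by $n-k$.

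Identity $(\ref{P 2.4.4})$ then follows immediately from $(\ref{P 2.4.1})$ and Euler's relation, since $S_{k+1}$ is homogeneous of degree $k+1$, so $\sum_i\lambda_i(\partial S_{k+1}/\partial \lambda_i)=(k+1)S_{k+1}$. For $(\ref{P 2.4.5})$ the key trick is to use $(\ref{P 2.4.2})$ to rewrite $\lambda_i S_{k,i}=S_{k+1}-S_{k+1,i}$, multiply through by $\lambda_i$, and sum:
\begin{align*}
\sum_{i=1}^n \lambda_i^2 S_{k,i}=S_1 S_{k+1}-\sum_{i=1}^n \lambda_i S_{k+1,i},
\end{align*}
and the remaining sum is $(k+2)S_{k+2}$ by $(\ref{P 2.4.4})$ applied one index higher. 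Finally, $(\ref{P 2.4.6})$ is the $n-1$ variable analogue of $(\ref{P 2.4.3})$: for fixed $j$, the polynomial $S_{k,j}$ is itself the $k$-th elementary symmetric polynomial in the $n-1$ variables $\{\lambda_\ell:\ell\neq j\}$, so applying $(\ref{P 2.4.3})$ to this restricted collection gives $\sum_{i\neq j}S_{k,i;j}=(n-1-k)S_{k,j}$, which is the stated identity under the convention that the $i=j$ term is excluded (or equivalently $S_{k,j;j}=0$).

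These are essentially combinatorial bookkeeping identities, so no step poses a real obstacle. The only point where a small amount of care is needed is in $(\ref{P 2.4.5})$, to chain $(\ref{P 2.4.2})$ and $(\ref{P 2.4.4})$ in the correct order and to apply the latter at level $k+1$ rather than $k$; and in $(\ref{P 2.4.6})$, to be explicit about the convention governing the $i=j$ diagonal term so that the count $n-1-k$ rather than $n-k$ appears.
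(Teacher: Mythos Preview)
Your argument is correct. The paper itself does not prove $(\ref{P 2.4.1})$--$(\ref{P 2.4.5})$ at all but simply refers to \cite{huisken-sinestrari_1999}; your self-contained derivations via the monomial expansion, double counting, and Euler's homogeneity relation are standard and sound, including the chaining of $(\ref{P 2.4.2})$ and $(\ref{P 2.4.4})$ at level $k+1$ for $(\ref{P 2.4.5})$.

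For $(\ref{P 2.4.6})$ the paper's one-line proof is to differentiate $(\ref{P 2.4.3})$ with respect to $\lambda_j$: since $S_{k,j}$ is independent of $\lambda_j$, the $i=j$ term drops out and $\partial S_{k,i}/\partial\lambda_j=S_{k-1,i;j}$ for $i\neq j$, giving the identity after a shift of index. Your route---apply $(\ref{P 2.4.3})$ directly to the $(n-1)$-variable polynomial $S_{k,j}$---is the same fact viewed combinatorially rather than through differentiation, and your remark about the convention $S_{k,j;j}=0$ (equivalently, summing only over $i\neq j$) is exactly what makes the paper's derivative argument work. The two approaches are interchangeable; yours has the small advantage of being self-contained, while the paper's makes the link to $(\ref{P 2.4.3})$ explicit.
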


\begin{proof}
For a proof of (\ref{P 2.4.1})-(\ref{P 2.4.5}) we refer \cite{huisken-sinestrari_1999}. Note that the last one follows by taking derivative with respect $\lambda_j$ on (\ref{P 2.4.3}).
\end{proof}

As an easy application of Lemma \ref{L 2.4} we obtain,

\begin{lemma}\label{L 2.5}
For any $k\in\set{0,\ldots,n}$, $i\in\set{1,\dots,n}$ and $\mu\in\rr^{n}$ we have
\begin{align}\label{2.5.1}
&\sum_{i=1}^n\dfrac{\partial Q_k}{\partial \lambda_i}=(n-k)-(n-k+1)\dfrac{Q_k}{Q_{k-1}},
\\\label{2.5.2}
&\sum_{i=1}^n\lambda_i\dfrac{\partial Q_k}{\partial \lambda_i}=Q_k,
\\\label{2.5.3}
&\sum_{i=1}^n\lambda_i^2\dfrac{\partial Q_k}{\partial \lambda_i}=(k+1)Q_k^2-(k+2)Q_{k+1}Q_k.
\end{align}
\end{lemma}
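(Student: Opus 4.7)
The plan is to prove all three identities by the same basic move: apply the quotient rule to $Q_k = S_{k+1}/S_k$ and then feed the resulting sums into the identities (\ref{P 2.4.3})--(\ref{P 2.4.5}) from Lemma \ref{L 2.4}. Concretely, using (\ref{P 2.4.1}) one has
\begin{align*}
\frac{\partial Q_k}{\partial \lambda_i} \;=\; \frac{S_{k,i}\,S_k - S_{k+1}\,S_{k-1,i}}{S_k^2} \;=\; \frac{S_{k,i}}{S_k} \;-\; Q_k\,\frac{S_{k-1,i}}{S_k}.
\end{align*}
So each identity in the statement reduces to evaluating $\sum_i S_{k,i}$, $\sum_i \lambda_i S_{k,i}$, $\sum_i \lambda_i^2 S_{k,i}$ (and similarly at index $k-1$) via Lemma \ref{L 2.4}.

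For (\ref{2.5.1}), I would sum the above display and invoke (\ref{P 2.4.3}) at levels $k$ and $k-1$ to get $\sum_i S_{k,i} = (n-k)S_k$ and $\sum_i S_{k-1,i} = (n-k+1)S_{k-1}$. Substituting yields $(n-k) - (n-k+1)\,Q_k\,S_{k-1}/S_k$, and the definition $Q_{k-1}=S_k/S_{k-1}$ converts this to $(n-k) - (n-k+1)\,Q_k/Q_{k-1}$.

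For (\ref{2.5.2}), multiply by $\lambda_i$ and sum; (\ref{P 2.4.4}) at levels $k$ and $k-1$ gives $\sum_i\lambda_i S_{k,i}=(k+1)S_{k+1}$ and $\sum_i\lambda_i S_{k-1,i}=k\,S_k$, whence the expression collapses to $(k+1)Q_k - k\,Q_k = Q_k$.

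For (\ref{2.5.3}) I would do the same with $\lambda_i^2$, using (\ref{P 2.4.5}) at levels $k$ and $k-1$:
\begin{align*}
\sum_{i=1}^n \lambda_i^2 S_{k,i} &= S_1 S_{k+1} - (k+2)S_{k+2}, \\
\sum_{i=1}^n \lambda_i^2 S_{k-1,i} &= S_1 S_k - (k+1) S_{k+1}.
\end{align*}
Dividing the first by $S_k$ and the second by $S_k^2$ (and multiplying by $S_{k+1}$), and noting that $S_{k+2}/S_k = Q_{k+1}Q_k$ while $S_{k+1}^2/S_k^2 = Q_k^2$, the two $S_1 Q_k$ contributions cancel and the remainder is exactly $(k+1)Q_k^2 - (k+2)Q_{k+1}Q_k$. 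There is no real obstacle here beyond this cancellation and keeping the indices straight; the calculation is a direct bookkeeping exercise on top of Lemma \ref{L 2.4}.
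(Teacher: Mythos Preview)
Your proposal is correct and is exactly the ``easy application of Lemma \ref{L 2.4}'' that the paper has in mind: differentiate the quotient $Q_k=S_{k+1}/S_k$ and plug in (\ref{P 2.4.3}), (\ref{P 2.4.4}), (\ref{P 2.4.5}) at levels $k$ and $k-1$. The bookkeeping you outline (including the $S_1Q_k$ cancellation in (\ref{2.5.3}) and the rewriting $S_{k+2}/S_k=Q_{k+1}Q_k$) is accurate.
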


Another important result is the Newton inequality for the $S_k$ polynomials,

\begin{lemma}\label{L2.6}
For any $k\in\set{1,\ldots,n-1}$ and $\lambda\in\rr^n$ we have
\begin{align*}
    (k+1)\left(n-(k-1)\right)S_{k-1}S_{k+1}\leq k(n-k)S_{k}^2. 
\end{align*}
Equality holds if, and only if, all $\lambda_i$ are equal.
\end{lemma}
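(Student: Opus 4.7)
\emph{Proof plan.} This is the classical Newton--Maclaurin inequality, and I would derive it from the real-rootedness of the polynomial
\[
P(t)=\prod_{i=1}^{n}(t+\lambda_i)=\sum_{j=0}^{n}S_j(\lambda)\,t^{\,n-j},
\]
whose roots are $-\lambda_i\in\rr$. The strategy combines two operations that both preserve real-rootedness: differentiation (Rolle's theorem) and the coefficient reversal $h(t)\mapsto t^{\deg h}\,h(1/t)$. Applied in a carefully chosen sequence, they will reduce $P$ to an explicit real-rooted \emph{quadratic} whose three coefficients are positive multiples of $S_{k-1}$, $S_k$, and $S_{k+1}$; the non-negativity of its discriminant then yields the stated inequality.

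Concretely, I would (i) reverse $P$ to obtain $\tilde P(t)=\prod_i(1+\lambda_i t)=\sum_j S_j\,t^j$; (ii) differentiate $\tilde P$ exactly $k-1$ times, so that the three lowest coefficients become $(k-1)!\,S_{k-1}$, $k!\,S_k$, and $\tfrac{(k+1)!}{2}S_{k+1}$; (iii) reverse once more so these become the top three coefficients of a polynomial of degree $n-k+1$; (iv) differentiate a further $n-k-1$ times to annihilate all remaining higher-degree terms. A routine factorial computation gives
\[
g(t) \;=\; \tfrac{(k-1)!(n-k+1)!}{2}\,S_{k-1}\,t^2 \;+\; k!(n-k)!\,S_k\,t \;+\; \tfrac{(k+1)!(n-k-1)!}{2}\,S_{k+1},
\]
which still has all real roots. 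Writing out its discriminant inequality and dividing both sides by $(k-1)!\,k!\,(n-k-1)!\,(n-k)!$ collapses everything exactly to $(k+1)(n-k+1)\,S_{k-1}S_{k+1}\leq k(n-k)\,S_k^2$, as wanted.

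For the equality case, a vanishing discriminant forces $g$ to have a double root; by the strict form of Rolle's theorem---roots of $f'$ strictly interlace those of $f$ unless $f$ has a repeated root---this back-propagates through the differentiation chain and forces $P$ to have a single root of multiplicity $n$, i.e.\ all $\lambda_i$ equal; the converse is immediate from $S_k=\binom{n}{k}c^k$ when $\lambda_i\equiv c$. A minor technicality, that the reversal step is only well-defined when the constant term is nonzero (which can fail if some $\lambda_i=0$), is handled by the standard density argument: perturb $\lambda$ to have all nonzero entries, apply the inequality, and let the perturbation tend to zero.

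\emph{Main obstacle.} The chief difficulty is the combinatorial bookkeeping in steps (ii)--(iv): one has to verify that after the four-step reduction and the discriminant division, every factorial cancels except for the desired $k(n-k)$ and $(k+1)(n-k+1)$, with no loose prefactor. A secondary subtlety is the rigidity required for the equality case, where one must argue that a double root of the final quadratic can only have arisen from a single root of multiplicity $n$ of the original $P$.
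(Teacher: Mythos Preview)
Your argument is correct and is in fact the classical proof of Newton's inequality. The paper does not give its own proof of this lemma; it simply refers the reader to Hardy--Littlewood--P\'olya's \emph{Inequalities}, where essentially the same real-rootedness argument appears (differentiate and reverse $\prod_i(t+\lambda_i)$ down to a real-rooted quadratic, then read off the discriminant). So you have supplied what the paper deliberately omits, and your bookkeeping checks out: the discriminant of your $g$ divided through by $(k-1)!\,k!\,(n-k-1)!\,(n-k)!$ collapses exactly to $(k+1)(n-k+1)\,S_{k-1}S_{k+1}\le k(n-k)\,S_k^2$. One small remark on the density step: the degeneracies you need to perturb away are not only $\lambda_i=0$ but any vanishing that drops the degree at a reversal (for instance $S_n=0$ at the first reversal or $S_{k-1}=0$ at the second); a generic perturbation of $\lambda$ avoids all of these simultaneously, so the fix is the same one you already indicated.
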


\begin{proof}
For a proof we refer to \cite{hardy_littlewood_polya_1952}. 
\end{proof}
Iterating Lemma \ref{L2.6} we also obtain,
\begin{corollary}\label{C 2.7}
For any $l,k\in\set{0,\ldots,n-1}$ such that $l\leq k$ and $\lambda\in\rr^n$ we have 
\begin{align}\label{lemma 2.6}
    Q_{k}\leq\dfrac{(l+1)(n-k)}{(k+1)(n-l)} Q_l.
\end{align}
\end{corollary}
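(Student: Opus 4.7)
The plan is to deduce the corollary by iterating Lemma~\ref{L2.6} in single steps and then simplifying a telescoping product. Working in the cone $\Gamma_{k+1}$, where all the $S_j$ involved are positive, I can rewrite Newton's inequality as
\begin{align*}
\frac{S_{k+1}}{S_k}\leq \frac{k(n-k)}{(k+1)(n-k+1)}\,\frac{S_k}{S_{k-1}},
\end{align*}
i.e., $Q_k\leq \frac{k(n-k)}{(k+1)(n-k+1)}\,Q_{k-1}$. This is exactly the special case $l=k-1$ of the claim, so the corollary amounts to chaining this inequality.

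For general $l\leq k$, I would iterate the single-step bound from index $l$ up to $k$:
\begin{align*}
Q_k\leq Q_l\prod_{j=l+1}^{k}\frac{j(n-j)}{(j+1)(n-j+1)}.
\end{align*}
It then remains to compute the product. Splitting the numerator and denominator into the ``$j$-part'' and the ``$n-j$-part'' and writing each as a ratio of factorials gives
\begin{align*}
\prod_{j=l+1}^{k}\frac{j}{j+1}=\frac{l+1}{k+1},\qquad \prod_{j=l+1}^{k}\frac{n-j}{n-j+1}=\frac{n-k}{n-l},
\end{align*}
so the product telescopes to $\frac{(l+1)(n-k)}{(k+1)(n-l)}$, which is precisely the constant in (\ref{lemma 2.6}).

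There is no real obstacle here beyond keeping track of the indices: the content is entirely in Lemma~\ref{L2.6}, and the induction is forced by the shape of that inequality (each application drops $k$ by one and contributes the correct factor). The only point to mention explicitly is the positivity assumption $\lambda\in\Gamma_{k+1}$, which justifies dividing by $S_k S_{k-1}$ at each step and keeps the direction of the inequality intact under the iteration.
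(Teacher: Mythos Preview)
Your proposal is correct and is exactly the argument the paper has in mind: the paper simply writes ``Iterating Lemma~\ref{L2.6} we also obtain'' before stating the corollary, and your single-step rewriting of Newton's inequality as $Q_k\leq \frac{k(n-k)}{(k+1)(n-k+1)}Q_{k-1}$ followed by the telescoping product is precisely that iteration. Your explicit remark that one must work in $\Gamma_{k+1}$ to justify the divisions and preserve the inequality direction is a useful clarification the paper leaves implicit.
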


\section{Gradient Estimates and applications}\label{sec3}

In this section we derive local gradient estimates for solutions to \eqref{I.2} and \eqref{I.3}. Note that locally, any solution of these equations we can be seen as a graph over neighborhood of a fixed point. 
\newline
Consequently, for a fixed point $F(x,t)=(x, u(x,t))$ and $F_0(x)=(x,u(x))$, the following equations holds
\begin{align}\label{4.1}
    u_t&=Q_k(Du,D^2u)\sqrt{1+|Du|^2},
    \\  \label{4.1.}
        Q_k(Du,D^2u)&=\dfrac{1}{\sqrt{1+|Du|^2}},
\end{align}
respectively. 
\newline
In addition, we consider the symmetrization of the Weingarten map, given by the matrix 
\begin{align}\label{4.0}
    A_{ij}=w^{-1}\left(u_{ij}-\dfrac{u_iu_ku_{kj}}{w(1+w)}-\dfrac{u_ju_ku_{ki}}{w(1+w)}+\dfrac{u_iu_ju_ku_lu_{kl}}{w^2(1+w)^2}\right),
\end{align}
where $w=\sqrt{1+|Du|^2}$ and the subindices denote derivatives with respect to the corresponding variable. It is well know fact that the eigenvalues of  the matrix $A_{ij}$ correspond to the principal curvatures of the graph of the function $u$. 
\newline
The method that we employ to obtain the estimate \eqref{T 34} was first used in \cite{caffarelli_nirenberg_spruck_1988} for deriving gradient estimates for Dirichlet Curvature PDEs. In \cite{holland_2014}, the author also used this method for deriving gradient estimate for the $(S_k)^{\frac{1}{k}}$-flow. 
\begin{remark}
Recall the function $S_k$ can be evaluated in a symmetric matrix $B$ by the relation	
	\begin{align*}
		\det(tI+B)=t^n+S_1(B)t^{n-1}+\ldots+S_{n}(B).
	\end{align*}
   Indeed, we choose $\alpha\subset \set{1,\ldots,n}$ and $|\alpha|$ denotes its cardinality. Then, we set $B[\alpha]$ to be the principal submatrix of $B$ in rows and columms index by $\alpha$, then it follows that  
   \begin{align*}
   	S_k(B)=\binom{n}{k}^{-1} \sum\limits_{\alpha\subset\nn, |\alpha|=k}\det(B[\alpha]).
   	\end{align*}
    It is relevant to remark that the properties stated in Section \ref{section:Properties} are valid for the eigenvalues of a given matrix (or equivalently, for diagonal matrices).
\end{remark}
The following lemma was inspired by \cite{mariel_2019}  and it is due to the shape of matrix $(A_{ij})$ after a change of coordinates for which $S_k(A)$ is easier to compute.  
\begin{lemma}\label{L 3.1}
If matrix $(A_{ij})$ has the form, 
\begin{align*}
A_{11}=\dfrac{u_{11}}{w^3}<0,\:A_{1j}=\dfrac{u_{1j}}{w^2}=A_{j1},\:A_{jj}=\dfrac{u_{jj}}{w}\mbox{ and }A_{ij}=A_{ji}=0\: (\mbox{for }2\leq i\neq j\leq n).
\end{align*}
 at some point $(x_0,t_0)$ or $x_0$ where its principal values lies in the cone $\Gamma_{k+1}$, then we have the following equations:
\begin{align}\label{4.1.10}
&\sum_{i=1}^n\dfrac{\partial Q_k}{\partial A_{ij}}A_{ij}=Q_k,
\\
&S_{l}(\tilde{\lambda})>0,\: \dfrac{\partial S_{l}}{\partial A_{jj}}(\tilde{\lambda})=S_{l-1}(\tilde{\lambda}|j)>0 \mbox{ for } j=2,\ldots n \mbox{ and }l=1,\ldots,k+1, 
\\ \label{4.1.17}
&S_l(\tilde{\lambda})=S_l(\tilde{\lambda}|j)+A_{jj}S_{l-1}(\tilde{\lambda}|j),
\\ \label{4.1.20}
&S_l(\tilde{\lambda})>S_l,
\\ \label{4.1.18}
&S_l(A)=S_l(\tilde{\lambda})+A_{11}S_{l-1}(\tilde{\lambda})-\sum_{j>1}A_{1j}^2S_{l-2}(\tilde{\lambda}|j),
\\ \label{4.1.19}
&S_l(A)=S_l(A|i)+A_{ii}S_{l-1}(A|i)-A_{1i}^2S_{l-2}(A|1i),
\\ \label{4.1.14}
&\dfrac{\partial Q_k}{\partial A_{11}}\geq \dfrac{n}{(k+1)(n-k)},\:\dfrac{\partial Q_k}{\partial A_{ii}}>0\mbox{ for }i>1,
\\ \label{4.1.16}
&\dfrac{\partial Q_k}{\partial A_{11}}\geq \dfrac{n}{(n-k)^2(k+1)}\sum_{i\geq 1}\dfrac{\partial Q_k}{\partial A_{ii}}.
\end{align}
\end{lemma}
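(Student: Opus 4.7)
The plan is to dispatch the eight assertions roughly in order of increasing difficulty, grouping them as algebraic identities, positivity statements, and quantitative derivative bounds. First, the matrix identities \eqref{4.1.17}, \eqref{4.1.18}, \eqref{4.1.19} and \eqref{4.1.10} all follow by inspection. In the chosen coordinate frame the only nonzero off-diagonal entries of $(A_{ij})$ live in row and column $1$, so the Laplace expansion of a principal $l$-minor whose index set contains $1$ reduces to exactly two contributions: $A_{11}$ times a diagonal minor of $\tilde\lambda$ and $-A_{1j}^2$ times a diagonal minor of $\tilde\lambda|j$. Summing the $l$-minors that contain $1$ and adding those that do not (which are diagonal and contribute $S_l(\tilde\lambda)$) yields \eqref{4.1.18}. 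For \eqref{4.1.19} one expands along row $i>1$ instead, where only the cofactor at position $(i,1)$ survives, leaving the single cross term $-A_{1i}^2 S_{l-2}(A|1i)$. Equation \eqref{4.1.17} is the standard splitting of $S_l(\tilde\lambda)$ by inclusion or exclusion of the factor $A_{jj}$, and \eqref{4.1.10} is Euler's identity for the $1$-homogeneous function $A\mapsto Q_k(A)$.

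Next I would prove the positivity statement and \eqref{4.1.20} together by strong induction on $l$. The base case $l=1$ is $S_1(\tilde\lambda)=S_1(A)-A_{11}>0$, since $\lambda\in\Gamma_{k+1}$ forces $S_1(A)>0$ and the hypothesis $A_{11}<0$ supplies the rest. For the inductive step, \eqref{4.1.18} rearranges to
\[
S_l(\tilde\lambda)=S_l(A)-A_{11}S_{l-1}(\tilde\lambda)+\sum_{j>1} A_{1j}^2\, S_{l-2}(\tilde\lambda|j),
\]
and each summand on the right is nonnegative (the first strictly positive): $S_l(A)>0$ from the cone condition, $S_{l-1}(\tilde\lambda)>0$ from the induction, and $S_{l-2}(\tilde\lambda|j)>0$ because Garding cones are preserved under restriction, so $\tilde\lambda\in\Gamma_{l-1}$ implies $\tilde\lambda|j\in\Gamma_{l-2}$. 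Having obtained $\tilde\lambda\in\Gamma_{k+1}\subset\rr^{n-1}$, the partial derivative $S_{l-1}(\tilde\lambda|j)=\partial S_l/\partial\tilde\lambda_j$ is positive by the same Garding property. Inequality \eqref{4.1.20} is then immediate from the displayed formula by dropping the last two nonnegative terms.

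For \eqref{4.1.14} at position $(1,1)$, I would use that $\tilde\lambda$ does not depend on $A_{11}$ to read off $\partial S_l/\partial A_{11}=S_{l-1}(\tilde\lambda)$ from \eqref{4.1.18}. Substituting \eqref{4.1.18} back into the quotient rule for $\partial Q_k/\partial A_{11}$ and using \eqref{4.1.17} to telescope the cross terms reduces the numerator to
\[
S_k(\tilde\lambda)^2-S_{k+1}(\tilde\lambda)S_{k-1}(\tilde\lambda)+\sum_{j>1} A_{1j}^2\Bigl(S_{k-1}(\tilde\lambda|j)^2-S_k(\tilde\lambda|j)S_{k-2}(\tilde\lambda|j)\Bigr).
\]
Lemma \ref{L2.6} applied to $\tilde\lambda\in\Gamma_{k+1}\subset\rr^{n-1}$ bounds the first term below by $\tfrac{n}{(k+1)(n-k)}S_k(\tilde\lambda)^2$, while the same lemma in $\rr^{n-2}$ applied to $\tilde\lambda|j\in\Gamma_k$ makes every cross bracket nonnegative; together with \eqref{4.1.20} this yields the claimed lower bound. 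The positivity $\partial Q_k/\partial A_{ii}>0$ for $i>1$ I would derive from the eigenvalue perturbation identity $\partial Q_k/\partial A_{ii}=\sum_s(\partial Q_k/\partial\lambda_s)\,v_{s,i}^2$, the coefficients being positive on $\Gamma_{k+1}$ by a parallel Newton-type computation. Finally, for \eqref{4.1.16}, the matrix analogue of \eqref{P 2.4.3} gives $\sum_i S_{l-1}(A|i)=(n-l+1)S_{l-1}(A)$, so
\[
\sum_i\frac{\partial Q_k}{\partial A_{ii}}=(n-k)-(n-k+1)\frac{Q_k}{Q_{k-1}}\leq n-k,
\]
and combining with the $(1,1)$-bound immediately yields \eqref{4.1.16}. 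The hardest step is the telescoping simplification of the numerator of $\partial Q_k/\partial A_{11}$: tracking the cancellations between the $A_{11}$ contributions from $S_k(A)$ and $S_{k+1}(A)$, and then recognizing each cross bracket as a Newton-type quantity for $\tilde\lambda|j$, is what makes the sharp constant $n/[(k+1)(n-k)]$ visible.
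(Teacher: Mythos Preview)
Your proposal is correct and follows the paper's line for the hardest step --- the lower bound on $\partial Q_k/\partial A_{11}$ --- essentially verbatim, including the telescoping that produces the Newton-type brackets and the appeal to $S_k(\tilde\lambda)>S_k$ at the end. Two sub-steps, however, diverge from the paper. For $\partial Q_k/\partial A_{ii}>0$ with $i>1$, the paper does not invoke the eigenvector perturbation identity; instead it writes $\partial S_l/\partial A_{ii}=S_{l-1}(A|i)$, substitutes \eqref{4.1.19} and then \eqref{4.1.18}, and decomposes the resulting numerator into three Newton-type brackets (in the variables $A|i$, $\tilde\lambda|i$, and $\tilde\lambda|ij$), each nonnegative by Lemma~\ref{L2.6}. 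Your eigenvector route is shorter but rests on knowing $\partial Q_k/\partial\lambda_s>0$ on $\Gamma_{k+1}$, which itself requires the same Newton computation applied to $\lambda|s$; the paper's fully matrix-based expansion avoids any reference to eigenvectors or simple-spectrum assumptions. For \eqref{4.1.16}, you use the minor-counting identity $\sum_i S_{l-1}(A|i)=(n-l+1)S_{l-1}(A)$ directly on the non-diagonal matrix $A$ to obtain $\sum_i\partial Q_k/\partial A_{ii}=(n-k)-(n-k+1)Q_k/Q_{k-1}$ in one line, whereas the paper arrives at the identical formula only after substituting the expansion \eqref{4.1.18} into each $S_l(A|i)$, reducing everything to sums over the diagonal block $\tilde\lambda$, and then applying \eqref{P 2.4.3} and \eqref{P 2.4.6} there. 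Your route to the trace is a genuine simplification; the paper's structural computation is longer but keeps every cancellation explicit in the special coordinate frame.
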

\begin{remark}
	Here we are using the following notation:  $S_{l,ij}=\dfrac{\partial S_l}{\partial u_{ij}}$, $\tilde{\lambda}=\mbox{diag}(A_{22},\ldots,A_{nn})$,  $S_k(B|i)$ or $S_k(B|ij)$ means that the $i$-th row and $i$-th column resp. $i,j$-th row and $i,j$-th column are omitted from a matrix $B$.
\end{remark}

\begin{proof}
Under this setting we have  
\begin{align*}
Q_{k,ij}=\left(\dfrac{S_{k+1}}{S_{k}}\right)_{ij}=\dfrac{S_{k+1,ij}S_k-S_{k+1}S_{k,ij}}{S_k^2}.
\end{align*}
Then it follows,
\begin{align*}
&Q_{k,11}=\dfrac{\partial Q_k}{\partial A_{11}}\dfrac{\partial A_{11}}{\partial u_{11}}=\dfrac{\partial Q_k}{\partial A_{11}}\dfrac{1}{w^{3}},
\\
&Q_{k,1i}=\dfrac{\partial Q_k}{\partial A_{1i}}\dfrac{\partial A_{1i}}{\partial u_{1i}}=\dfrac{\partial Q_k}{\partial A_{1i}}\dfrac{1}{w^{2}}=Q_{k,i1},\:i\neq 1
\\
&Q_{k,ii}=\dfrac{\partial Q_k}{\partial A_{ii}}\dfrac{\partial A_{ii}}{\partial u_{ii}}=\dfrac{\partial Q_k}{\partial A_{ii}}\dfrac{1}{w}
\\
&Q_{k,ij}=\dfrac{\partial Q_k}{\partial A_{ij}}\dfrac{\partial A_{ij}}{\partial u_{ij}}=\dfrac{\partial Q_k}{\partial A_{ij}}\dfrac{1}{w}=0=Q_{k,ji}\: (\mbox{for }2\leq i\neq j\leq n).
\end{align*}
We note that Equation (\ref{4.1.10}) follows by Equation (\ref{2.5.2}) together with the formula 
$\sum\limits_{i,j}\dfrac{\partial S_k}{\partial A_{ij}}A_{ij}=(k+1)S_{k+1}$. 
\newline
Furthermore, since $A_{11}<0$, it follows that $S_{l}(\tilde{\lambda})>0$ for $l=1$. Note that this fact implies that $\dfrac{\partial S_{l}}{\partial A_{jj}}(\tilde{\lambda})=S_{l-1}(\tilde{\lambda}|j)>0$.  Then, by iterating this process, we can get the same result for $j=2,\ldots n$ and $l=1,\ldots,k+1$. Consequently, by the shape of the matrix $A_{ij}$ and the properties given in Lemma \ref{L 2.4}, equations \eqref{4.1.17}-\eqref{4.1.19} hold.  
\\
Now we analyze the diagonal terms $\dfrac{\partial Q_k}{\partial A_{ii}}$. For $i=1$, it follows
\begin{align*}
\dfrac{\partial Q_k}{\partial A_{11}}&=\dfrac{1}{S_k^2}\left(\dfrac{\partial S_{k+1}}{\partial A_{11}}S_k-S_{k+1}\dfrac{\partial S_k}{\partial A_{11}}\right)=\dfrac{1}{S_k^2}\left(S_k(\tilde{\lambda})S_k-S_{k+1}S_{k-1}(\tilde{\lambda})\right).
\end{align*}
Then, by combining the above Equation with Equation (\ref{4.1.17}), we obtain 
\begin{align*}
\dfrac{\partial Q_k}{\partial A_{11}}=\dfrac{1}{S_k^2}\left(S_k(\tilde{\lambda})^2-S_{k-1}(\tilde{\lambda})S_{k+1}(\tilde{\lambda})+\sum_{j>1}A_{1j}^2(S_{k-1}(\tilde{\lambda}|j)^2-S_k(\tilde{\lambda}|j)S_{k-2}(\tilde{\lambda}|j)  )  \right).
\end{align*}
Note that by Lemma \ref{L2.6}, the term in the sum is non-negative, and therefore we drop it form the inequality. Then, by  Equation \eqref{4.1.20}, it follows that
\begin{align*}
\dfrac{\partial Q_k}{\partial A_{11}}\geq \dfrac{n}{(k+1)(n-k)}\left(\dfrac{S_{k}(\tilde{\lambda})}{S_k}\right)^2\geq \dfrac{n}{(k+1)(n-k)}.
\end{align*}

Now we show that $\dfrac{\partial Q_k}{\partial A_{ii}}\geq 0$ for $i>1$. Indeed, by Equation \eqref{4.1.19} we have,
\begin{align*}
\dfrac{\partial Q_k}{\partial a_{ii}}&=\dfrac{1}{S_k^2}\left(S_k(A|i)^2-S_{k+1}(A|i)S_{k-1}(A|i)\right)
\\
&\hspace{4.3mm}+\dfrac{A_{1i}^2}{S_k^2}\left[S_{k-1}(A|i)S_{k-1}(A|1i)-S_k(A|i)S_{k-2}(A|1i)\right].
\end{align*}
Now we use Equation \eqref{4.1.18} on each $S_l(a|i)$ to obtain,
\begin{align*}
\dfrac{\partial Q_k}{\partial A_{ii}}&=\dfrac{1}{S_k^2}\left(S_k(A|i)^2-S_{k+1}(A|i)S_{k-1}(A|i)\right)+\dfrac{A_{1i}^2}{S_k^2}\left(S_{k-1}(\tilde{\lambda}|i)^2-S_k(\tilde{\lambda}|i)S_{k-2}(\tilde{\lambda}|i)\right)
\\
&\hspace{.5cm}+\dfrac{A_{1i}^2}{S_k^2}\sum_{j>1,j\neq i}A_{1j}^2\left[S_{k-2}(\tilde{\lambda}|ij)S_{k-2}(\tilde{\lambda}|ij)-S_{k-3}(\tilde{\lambda}|ij)S_{k-1}(\tilde{\lambda}|ij)\right].
\end{align*}
We note that these three terms are non-negative by Lemma \ref{L2.6}. Finally, for the sum of the diagonal terms we have,
\begin{align*}
\sum_{i\geq 1}\dfrac{\partial Q_k}{\partial A_{ii}}&=
\dfrac{1}{S_k}\left(S_k(\tilde{\lambda})+\sum_{i>1}\left[S_k(\tilde{\lambda}|i)+a_{11}S_{k-1}(\tilde{\lambda}|i)-\sum_{j>1,j\neq i}a_{1j}^2S_{k-2}(\tilde{\lambda}|ij)\right]\right)
\\ 
&\hspace{-7mm}-\dfrac{Q_k}{S_k}\left(S_{k-1}(\tilde{\lambda})+\sum_{i>1}\left[S_{k-1}(\tilde{\lambda|i})+a_{11}S_{k-2}(\tilde{\lambda}|i)-\sum_{j>1,j\neq i}a_{1j}^2S_{k-3}(\tilde{\lambda}|ij)\right]\right).
\end{align*}
By applying Equation \eqref{P 2.4.3} on each term of the form,
\begin{align*}
\sum_{i>1}S_l(\tilde{\lambda}|i)=(n-1-l)S_l(\tilde{\lambda}),\:l=k-2,k-1,k
\end{align*}
together with Equation \eqref{P 2.4.6} on the terms,
\begin{align*}
\sum_{i>1}S_l(\tilde{\lambda}|ij)=(n-1-l-1)S_l(\tilde{\lambda}|j),\:l=k-3,k-2,
\end{align*}
it will follow that,
\begin{align*}
\sum_{i\geq 1}\dfrac{\partial Q_k}{\partial a_{ii}}&=\dfrac{(n-k)}{S_k}\left(S_k(\tilde{\lambda})+a_{11}S_{k-1}(\tilde{\lambda})-\sum_{j>1}a_{1j}^2S_{k-2}(\tilde{\lambda}|j)\right)
\\ 
&\hspace{.1cm}-(n-k+1)\dfrac{Q_k}{S_k}\left(S_{k-1}(\tilde{\lambda})+a_{11}S_{k-2}(\tilde{\lambda})-\sum_{j>1}a_{1j}^2S_{k-3}(\tilde{\lambda}|j)\right).
\end{align*}
Finally, by Equation \eqref{4.1.18}, we have 
\begin{align*}
\sum_{i\geq 1}\dfrac{\partial Q_k}{\partial a_{ii}}&=(n-k)-(n-k+1)\dfrac{Q_k}{Q_{k-1}}\leq (n-k),
\end{align*}
and after combining this with Equation (\ref{4.1.14}), Equation \eqref{4.1.16} will hold. 
\end{proof}

\begin{proposition}\label{P 3.2}
Let $u:\Omega\to\rr$ be a solution to Equation (\ref{4.1}), where $\Omega=B_r(0)\times(0,T]$ with $r>0$. Assume that the principal curvatures of the graph of $u$ lies in the cone $\Gamma_{k+1}$ for each $t$. Then, for $t>0$, it holds 
\begin{align}\label{4.1.0}
    |Du(0,t)|\leq \exp\left(K+\dfrac{KMT}{rt}\dfrac{MrT}{t}+\dfrac{M^2}{t}+\dfrac{(T+1)r^2}{t}+\dfrac{TM^2}{tr^2}+\dfrac{TM}{tr}\right),
\end{align}
where $M=\sup\limits_{\Omega}u$ and $K=K(k,n)$. 
\end{proposition}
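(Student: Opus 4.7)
The plan is to follow the classical auxiliary-function approach of Caffarelli-Nirenberg-Spruck, adapted to the parabolic setting as in Holland's work on $(S_k)^{1/k}$-flows. I introduce a test function of the form
\begin{align*}
\Phi(x,t) = \eta(x,t)\,\log w + g(u)
\end{align*}
where $w=\sqrt{1+|Du|^2}$ and $\eta$ is a space-time cutoff that vanishes on the parabolic boundary of $B_r(0)\times(0,T]$. Guided by the form of the exponent in \eqref{4.1.0}, a natural choice is $\eta(x,t)=t(r^2-|x|^2)^2/r^4$ times a large constant, with $g(u)=-\alpha u^2$ or $-\alpha u$ chosen to dominate the lower-order contributions coming from $u\le M$. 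If $\Phi$ achieves its maximum on the parabolic boundary, the estimate is trivial; otherwise I will work at an interior maximum $(x_0,t_0)$.

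First I would rotate the spatial coordinates at $(x_0,t_0)$ so that $u_1=|Du|=:p$ and $u_i=0$ for $i\geq 2$, and then perform a secondary rotation of the $(x_2,\dots,x_n)$-subspace that diagonalizes the lower-right $(n-1)\times(n-1)$ block of $D^2u$. Inspecting the formula \eqref{4.0} shows that this puts $(A_{ij})$ into exactly the shape required in Lemma \ref{L 3.1}, with $A_{11}=u_{11}/w^3$, $A_{1j}=u_{1j}/w^2$, and $A_{ij}=0$ for $2\le i\ne j\le n$. The critical-point conditions at $(x_0,t_0)$, namely $D\Phi=0$ and $F^{ij}\Phi_{ij}-\Phi_t\le 0$ where $F^{ij}=\partial Q_k/\partial u_{ij}$, will then be combined with the identities in Lemma \ref{L 3.1}. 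Direct computation gives
\begin{align*}
\partial_k\log w=\frac{u_l u_{lk}}{w^2},\qquad \partial_k\partial_l\log w=\frac{u_{ml}u_{mk}+u_m u_{mkl}}{w^2}-2\,\frac{u_l u_i u_{il}u_m u_{mk}}{w^4},
\end{align*}
and differentiating the graphical equation \eqref{4.1} once yields an identity for $F^{ij}u_{ijk}$ in terms of $u_{tk}$ modulo lower-order quantities. Substituting these and rewriting in $A_{ij}$-coordinates, the ``good'' term $F^{ii}A_{ii}^2$ appears on the nonnegative side while the critical-point identity $D\Phi=0$ trades $u_{1k}$ for derivatives of $\eta$ and for $g'(u)u_k$.

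The hard part will be the bookkeeping: after these substitutions, ``bad'' terms of the form $\sum_i F^{ii}u_{1i}^2/w^3$ and $F^{11}(D\eta)^2/\eta^2$ appear, and these must be absorbed into $F^{11}A_{11}^2$. This is exactly the role of the lower bound \eqref{4.1.16}, which says the $A_{11}$-direction carries a fixed fraction of the total trace $\sum_i \partial Q_k/\partial A_{ii}$; since $A_{11}<0$ has order $p^2/w^3\sim 1/w$ at the critical point and $p\to\infty$ is precisely the regime we want to rule out, this term dominates the bad contributions once $w$ is assumed large enough (otherwise the estimate is trivial). The remaining cutoff-derivative terms $F^{ij}\eta_{ij}$, $\eta_t/\eta$, $|D\eta|^2/\eta^2$, and the contributions from $g(u)$ generate, after elementary bounds using $u\le M$ and the support properties of $\eta$, exactly the error terms $M/(rt)$, $1/t$, $r^2/t$, $M^2/(tr^2)$, and $M/(tr)$ visible in \eqref{4.1.0}.

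Once an inequality of the form
\begin{align*}
\eta(x_0,t_0)\log w(x_0,t_0)\le K\Bigl(1+\tfrac{MT}{rt}\tfrac{MrT}{t}+\tfrac{M^2}{t}+\tfrac{(T+1)r^2}{t}+\tfrac{TM^2}{tr^2}+\tfrac{TM}{tr}\Bigr)
\end{align*}
is established at the interior maximum, I conclude by evaluating $\Phi$ at $(0,t)$: since $\Phi(0,t)\le\Phi(x_0,t_0)$ and $\eta(0,t)$ has a positive lower bound depending on $r$ and $t$, dividing and exponentiating gives the desired bound on $|Du(0,t)|$, up to absorbing the contribution of $g(u(0,t))$ into the constant $K=K(k,n)$.
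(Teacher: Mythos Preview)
Your overall strategy---auxiliary function, interior maximum, rotation to put $A$ into the special form of Lemma~\ref{L 3.1}, differentiation of the PDE, and absorption via \eqref{4.1.16}---is exactly the paper's. But your concrete test function differs from the paper's in a way that breaks a key step.

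The paper uses the \emph{multiplicative} function $G=t\,\rho(x)\,\varphi(u)\,\ln u_\xi$ with $\rho(x)=1-|x|^2/r^2$ and $\varphi(u)=1+u/M$. After taking logarithms, the first-order condition at the maximum reads
\[
\dfrac{u_{11}}{u_1\ln u_1}=-\dfrac{\rho_1}{\rho}-\dfrac{\varphi'}{\varphi}u_1,
\]
and because $\varphi'/\varphi>0$ this forces $u_{11}<0$ once $u_1$ is large (cf.\ \eqref{4.1.11}). The negativity $A_{11}=u_{11}/w^3<0$ is the standing hypothesis of Lemma~\ref{L 3.1}; without it the lower bounds \eqref{4.1.14}--\eqref{4.1.16} for $\partial Q_k/\partial A_{11}$ are simply unavailable.

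Your additive choice $\Phi=\eta\log w+g(u)$ with $g(u)=-\alpha u$ or $-\alpha u^2$ (so $g'<0$) goes the wrong way. At the critical point,
\[
0=\eta_1\log w+\eta\,\dfrac{u_1 u_{11}}{w^2}+g'(u)\,u_1
\quad\Longrightarrow\quad
u_{11}=-\dfrac{w^2}{\eta u_1}\Bigl(\eta_1\log w+g'(u)u_1\Bigr),
\]
and for large $u_1$ the dominant contribution is $-w^2 g'(u)/\eta>0$. Thus $A_{11}>0$, Lemma~\ref{L 3.1} does not apply, and your subsequent assertion that ``$A_{11}<0$ has order $p^2/w^3$'' is not justified by your setup. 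The fix is to take an \emph{increasing} $u$-factor (equivalently $g'>0$), which is precisely the role of the paper's $\varphi(u)=1+u/M$.

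A smaller point: the ``good'' term here is not a generic $\sum_i F^{ii}A_{ii}^2$. In the paper's decomposition into blocks $B$ and $C$ (see \eqref{2 terminos}), the dominant positive contribution is the single term
\[
\dfrac{\partial Q_k}{\partial A_{11}}\left(\dfrac{2u_1^2}{w^2}-1-\dfrac{2}{\ln u_1}\right)\dfrac{u_{11}^2}{w^3 u_1^2\ln u_1},
\]
and it only becomes large after substituting the critical-point bound $u_{11}\le -\tfrac{\varphi'}{2\varphi}u_1^2\ln u_1$, which again hinges on the sign of $\varphi'$. So the sign issue above is not cosmetic: it is exactly what produces the quantity you need to absorb everything else.
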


\begin{proof}
The proof is very similar to the given for Theorem 1.1  in \cite{holland_2014}. For this reason we will use the same notation. 
\newline
Note that by a translation on the $x_{n+1}$-axis, we may assume that $u>0$.  In addition, we take the matrix $A$ given in \eqref{4.0}, and by a slight abuse of notation we set, 
\begin{align*}
    Q_k(A)=\dfrac{S_{k+1}(A)}{S_k(A)},
\end{align*}
where each  function is evaluated in the eigenvalues of $A$. Consequently, Equation \eqref{4.1} becomes 
\begin{align}{\label{4.1.1}}
    Q_k(A)=\dfrac{u_t}{w},\: w=\sqrt{1+|Du|^2}.
\end{align}

Now we define the test function $G:\Omega\times\Sp^{n-1}\to\rr$ given by 
\begin{align*}
    G(x,t,\xi)=t\rho(x)\varphi(u)\ln(u_{\xi}),
\end{align*}
here $\xi$ denote the direction derivative vector,
\begin{align} \label{test 1}
    \rho(x)=1-\dfrac{|x|^2}{r^2}\mbox{ and }\varphi(u)=1+\dfrac{u}{M},
\end{align}
where $M=\sup\limits_{\Omega}u$. 
\newline
Since $\rho$ vanishes at $\partial B_r(0)$, we may suppose that $G$ attains its maximum at some point $(x_0,t_0)$ with $t_0>0$ and $|x_0|<r$. Moreover, after a rotation,  we may choose $\xi=\eps_1$ where $\eps_i$ denotes the canonical euclidean base of $\rr^{n+1}$. 
\newline
Then at $(x_0,t_0)$ the following equations holds,
\begin{align}\label{4.1.3}
    0&=(\ln G)_i=\dfrac{\rho_i}{\rho}+\dfrac{\varphi'}{\varphi}u_i+\dfrac{u_{1i}}{u_1\ln u_1},
\end{align}
 \begin{align} \label{4.1.2}
     0&\geq(\ln G)_{ij}
     \\ \notag
     &=\dfrac{\rho_{ij}}{\rho}+\dfrac{\varphi'}{\varphi}u_{ij}+\dfrac{\varphi'}{\varphi\rho}(\rho_iu_j+\rho_ju_i)+\dfrac{u_{ij1}}{u_1\ln u_1}-\left(1+\dfrac{2}{\ln u_1}, \right)\dfrac{u_{1i}u_{1j}}{u_1^2\ln u_1},
\end{align}
    \begin{align}  \label{4.1.7}
    0&\leq (\ln G)_t=\dfrac{\varphi'}{\varphi}u_t+\dfrac{u_{1t}}{u_1\ln u_1}+\dfrac{1}{t}.
\end{align}
Finally, we change the coordinates on $\Omega$ in such a way that the following equations hold,
\begin{align}\label{4.1.21}
&u_i(x_0,t_0)=0,\mbox{ for }i\neq 1;\:u_{ij}(x_0,t_0)=0,\mbox{ for }i\neq j \mbox{ and }i,j\geq 2;
\\ \notag
&u_{22}(x_0,t_0)\geq\ldots\geq u_{nn}(x_0,t_0).
\end{align}
We claim that  $u_{11}<0$ at $(x_0,t_0)$. Indeed, at this point we have 
\begin{align*}
1-\dfrac{u_1^2}{w(w+1)}=\dfrac{1}{w}.
\end{align*}
Then, it follows that
\begin{align*}
A_{11}=\dfrac{u_{11}}{w^3},\:A_{1j}=\dfrac{u_{1j}}{w^2}=A_{j1},\:A_{jj}=\dfrac{u_{jj}}{w}\mbox{ and }A_{ij}=A_{ji}=0\: (\mbox{for }2\leq i\neq j\leq n).
\end{align*}
Furthermore, by Equation  \eqref{4.1.3}, it follows that
\begin{align}\label{4.1.8}
&\dfrac{u_{11}}{u_1\ln u_1}=-\dfrac{\rho_1}{\rho}-\dfrac{\varphi'}{\varphi}u_1,
\\ \label{4.1.9}
&\dfrac{u_{1i}}{u_1\ln u_1}=-\dfrac{\rho_i}{\rho} \:\:(\mbox{for }i\geq 2). 
\end{align}
In what follows all quantities are evaluated at $(x_0,t_0)$ and also we assume that $G(x_0,t_0)$ is big enough such that 
\vskip -5mm
\begin{align*}
    G=t\rho\varphi\ln u_1\geq 16\dfrac{MT}{r}.
\end{align*}
Therefore, we obtain
\begin{align}\label{4.1.12}
u_1\geq \dfrac{8M}{r\rho}\mbox{ and }\:16\dfrac{MT}{r}\geq\dfrac{8\varphi t}{\varphi'r},
\end{align}
which implies
\begin{align}\label{4.1.4}
    \abs{\dfrac{\rho_j}{\rho}}<\dfrac{1}{4}\dfrac{\varphi'}{\varphi}u_1,  
\end{align}
for all $j$. Finally, by combining \eqref{4.1.8} and \eqref{4.1.4}, it holds
\begin{align}\label{4.1.11}
    u_{11}=u_1\ln u_1\left(-\dfrac{\rho_1}{\rho}-\dfrac{\varphi'}{\varphi}u_1 \right)\leq- u_1^2\dfrac{\varphi'}{2\varphi}\ln u_1<0.
\end{align}
Hence, $A_{11}<0$ and we can use the equations from Lemma \ref{L 3.1}.

Then, by Equation \eqref{4.1.2}, it follows
\begin{align} \label{2 terminos}
    0\geq Q_{k,ij}(\ln G)_{ij}&=\underbrace{Q_{k,ij}\left(\dfrac{\rho_{ij}}{\rho}+\dfrac{\varphi'}{\varphi}u_{ij}+\dfrac{\varphi'}{\varphi\rho}(\rho_iu_j+\rho_ju_i) \right)}_{=B}
    \\ \notag
    &\hspace{4.5mm}+\underbrace{Q_{k,ij}\left(\dfrac{u_{ij1}}{u_1\ln u_1}-\left(1+\dfrac{2}{\ln u_1} \right)\dfrac{u_{1i}u_{1j}}{u_1^2\ln u_1}\right)}_{=C}.
   \end{align}
We start with the term $B$ in \eqref{2 terminos}. By  Equation \eqref{4.1.21}, we can split $B$ in the following way 
\begin{align}\label{B flow}
    B&=Q_{k,11}\left(\dfrac{\rho_{11}}{\rho}+\dfrac{\varphi'}{\varphi}u_{11}+\dfrac{2\varphi'}{\varphi\rho}\rho_1u_1 \right)+\sum_{i>1}Q_{k,ii}\left(\dfrac{\rho_{ii}}{\rho}+\dfrac{\varphi'}{\varphi}u_{ii}\right)
    \\ \notag
    &\hspace{4.5mm}+ 2\sum_{j>1}Q_{k,1j}\left(\dfrac{\rho_{1j}}{\rho}+\dfrac{\varphi'}{\varphi}u_{1j}+\dfrac{\varphi'}{\varphi\rho}\rho_ju_1 \right).
\end{align}
Then, by equations \eqref{4.1.10} and \eqref{4.1.1}, the term $\sum\limits_{i,j}Q_{k,ij}\dfrac{\varphi'}{\varphi}u_{ij}$ in \eqref{B flow} satisfies,
\begin{align}\label{3.1.}
\dfrac{\varphi'}{\varphi}\sum_{i,j}Q_{k,ij}u_{ij}=\dfrac{\varphi'}{\varphi}\left(Q_{k,11}u_{11}+2\sum_{i>1}Q_{k,1i}u_{1i}+Q_{k,ii}u_{ii}\right)=\dfrac{\varphi'}{\varphi}\dfrac{u_t}{w}.
\end{align} 

Moreover, by equations \eqref{4.1.9} and \eqref{4.1.18},  the last term in \eqref{B flow} can be seen as
\begin{align*}
&2\sum_{j> 1}Q_{k,1j}\dfrac{\varphi'\rho_j}{\varphi\rho}u_1=-2\dfrac{u_1^2\ln u_1}{\rho}\dfrac{\varphi'}{\varphi}\sum_{j>1}\dfrac{\partial Q_k}{\partial A_{1j}}A_{1j}
\\
&=2\dfrac{u_1^2\ln u_1}{\rho}\dfrac{\varphi'}{\varphi}\dfrac{S_k(S_{k+1}(A|1)+A_{11}S_{k}(A|1)-S_{k+1})-S_{k+1}(S_k(A|1)+A_{11}S_{k-1}(A|1)-S_k)}{S_k^2}
\\
&=2\dfrac{u_1^2\ln u_1}{\rho}\dfrac{\varphi'}{\varphi}\dfrac{S_kS_{k+1}(\lambda)-S_{k+1}S_k(\lambda)}{S_k^2},
\end{align*}
in the last line we use Equation \eqref{4.1.20}. Recall that $S_l(\lambda)$ denote the $l$-elemental symmetric polynomial evaluated in the diagonal matrix $\lambda=(A_{ii})_{i\geq 1}$. 
\newline
Then, it follows that  
\begin{align*}
2\sum_{j> 1}Q_{k,1j}\dfrac{\varphi'\rho_j}{\varphi\rho}u_1&=2\dfrac{u_1^2\ln u_1}{\rho}\dfrac{\varphi'}{\varphi}\dfrac{S_k(\lambda)}{S_k}(Q_k(\lambda)-Q_k).
\end{align*}
Following an idea of \cite{mariel_2019}, we may use a first order Taylor expansion on $Q_k(\lambda)-Q_k$ around $\lambda$ to see that,
\begin{align*}
Q_k=Q_k(\lambda)+\sum_{j>1}\dfrac{\partial Q_k}{\partial A_{1j}}(\lambda)A_{1j}+\sum_{j>1}\dfrac{\partial^2Q_k}{\partial A_{1j}\partial a_{1j}}(\eta)A_{1j}^2\leq Q_k(\lambda), 
\end{align*} 
where the inequality follows from $\dfrac{\partial Q_k}{\partial A_{1j}}(\lambda)=0$ and the error term is non positive by concavity of $Q_k$. Hence, the whole term is non-negative and we may drop it from \eqref{B flow}.
\newline
For the term $2Q_{k,11}\dfrac{\varphi'\rho_1}{\varphi\rho}u_1$, by equations  \eqref{4.1.14} and \eqref{4.1.8}, we have
\begin{align*}
2Q_{k,11}\dfrac{\varphi'\rho_1}{\varphi\rho}u_1&=-2Q_{k,11}\dfrac{\varphi'}{\varphi}\left(\dfrac{u_{11}}{u_1\ln u_1}+\dfrac{\varphi'}{\varphi}u_1 \right)u_1\geq-\dfrac{\varphi'^2u_1^2}{2\varphi^2w^3}\dfrac{\partial Q_k}{\partial A_{11}}
\\
&\geq -\dfrac{u_1^2}{2M^2w^3}\sum_{i\geq 1}\dfrac{\partial Q_k}{\partial A_{ii}}.
\end{align*}
Finally, for the term $\sum\limits_{i,j}Q_{k,ij}\dfrac{\rho_{ij}}{\rho}$ it follows that 
 \begin{align}\label{3.2.}
   \sum_{i,j}Q_{k,ij}\dfrac{\rho_{ij}}{\rho}&=-\dfrac{2}{r^2\rho}\sum_{i\geq 1}Q_{k,ii}
   \\ \notag
   &=-\dfrac{2}{r^2\rho}\left(\dfrac{1}{w^3}\dfrac{\partial Q_k}{\partial a_{11}}+\dfrac{1}{w}\sum_{i\geq 1}\dfrac{\partial Q_k}{\partial a_{ii}}\right)\geq -\dfrac{2}{wr^2\rho}\sum_{i\geq 1}\dfrac{\partial Q_k}{\partial a_{ii}}.
\end{align}
Consequently we obtain,
\begin{align}\label{3.2 B}
B\geq \dfrac{\varphi'}{\varphi}\dfrac{u_t}{w}-\dfrac{2}{wr^2\rho}\sum_i\dfrac{\partial Q_k}{\partial a_{ii}}-\dfrac{u_1^2}{2M^2w^3}\sum_{i}\dfrac{\partial Q_k}{\partial a_{ii}}.
\end{align}
Now we estimate the term $C$ in Equation \eqref{2 terminos}. By differentiating Equation \eqref{4.1.1} in the $\eps_1$-direction, we obtain 
\begin{align*}
\dfrac{u_{t1}}{w}-\dfrac{u_t}{w^3}u_1u_{11}&=\dfrac{\partial Q_k}{\partial A_{ij}}A_{ij,1}   
\\
&=\dfrac{\partial Q_k}{\partial A_{11}}A_{11,1}+2\dfrac{\partial Q_k}{\partial A_{i1}}A_{i1,1}+\dfrac{\partial Q_k}{\partial A_{ii}}a_{ii,1},
\end{align*}
where $A_{ij, 1}$ denotes $\dfrac{\partial A_{ij}}{\partial x_1}$ and
\begin{align*}
A_{11,1}&=\dfrac{u_{111}}{w^3}-\dfrac{3u_1}{w^5}u_{11}^2-\dfrac{2u_1}{w^3(1+w)}\sum_{j>1}u_{1j}^2, 
\\
A_{1i,1}&=\dfrac{u_{1i1}}{w^2}-\dfrac{2u_1}{w^4}u_{11}u_{1i}-\dfrac{u_1}{w^2(1+w)}u_{1i}u_{ii}-\dfrac{u_1}{w^3(w+1)}u_{11}u_{i1},
\\ 
A_{ii,1}&=\dfrac{u_{ii1}}{w}-\dfrac{u_1}{w^3}u_{11}u_{ii}-\dfrac{2u_1}{w^2(1+w)}u_{1i}^2.
\end{align*}
Then, we have the following equation
\begin{align*}
&Q_{k,ij}u_{ij1}=\dfrac{\partial Q_k}{\partial A_{11}}\dfrac{u_{111}}{w^3} + 2\sum_i\dfrac{\partial Q_k}{\partial A_{1i}}\dfrac{u_{1i1}}{w^2} + \sum_{i}\dfrac{\partial Q_k}{\partial A_{11}}\dfrac{u_{ii1}}{w}
\\
&=\dfrac{u_{t1}}{w}+\dfrac{\partial Q_k}{\partial A_{11}}\left(\dfrac{2u_{1}}{w^5}u_{11}^2+\dfrac{2u_1}{w^3(1+w)}\sum_{j}u_{1j}^2\right)+\sum_{i}\dfrac{\partial Q_k}{\partial A_{ii}}\dfrac{2u_1}{w^2(1+w)}u_{1i}^2
\\
&+2\sum_{i}\dfrac{\partial Q_k}{\partial A_{1i}}\left(\dfrac{u_1}{w^4}u_{11}u_{1i}+\dfrac{u_1}{w^2(1+w)}u_{1i}u_{ii}+\dfrac{u_1}{w^3(w+1)}u_{11}u_{i1}\right), 
\end{align*}
where in the last equality we use Equation \eqref{4.1.10}. After replacing and combining this last Equation in $C$ with Equation \eqref{4.1.8}  we obtain,
\begin{align} \label{C flow}
C&=Q_{k,ij}\left(\dfrac{u_{ij1}}{u_1\ln u_1}-\left(1+\dfrac{2}{\ln u_1} \right)\dfrac{u_{1i}u_{1j}}{u_1^2\ln u_1}\right)
\\ \notag
&\geq\dfrac{u_{t1}}{wu_1\ln u_1}+\dfrac{\partial Q_k}{\partial A_{11}}\left(\dfrac{2u_{1}^2}{w^2}-1-\dfrac{2}{\ln u_1}\right)\dfrac{u_{11}^2}{w^3u_1^2\ln u_1}
\\ \notag
&\hspace{4.5mm}+\dfrac{2}{w^2(1+w)\ln u_1}\sum_{i>1}\dfrac{\partial Q_k}{\partial A_{1i}}u_{1i}u_{ii}.
\end{align}
For the term third term in Equation \eqref{C flow} we have,
\begin{align*}
\dfrac{\partial Q_k}{\partial A_{1i}}u_{1i}u_{ii}&=-\dfrac{S_{k-1}(A|1i)S_k-S_{k-2}(A|1i)S_{k+1}}{S_k^2}A_{1i}u_{1i}u_{ii}
\\
&=\dfrac{u_{1i}^2}{wS_k^2}\left(-S_{k}^2\dfrac{\partial Q_k}{\partial A_{11}}+S_kS_k(A|1i)-S_{k+1}S_{k-1}(A|1i)\right).
\end{align*}
We note that the difference of the last two terms in the last line is non-negative. Indeed, if we consider
\begin{align*}
\dfrac{\partial Q_k}{\partial a_{ii}}(A|1)=\dfrac{S_k(A|1i)S_k(A|1)-S_{k+1}(A|1)S_{k-1}(A|1i)}{S_k(A|1)^2},
\end{align*}
it follows that 
\begin{align*}
&S_kS_k(A|1i)-S_{k+1}S_{k-1}(A|1i)
\\
&=\dfrac{\partial Q_k}{\partial A_{jj}}(A|1)S_{k}(A|1)S_k+\dfrac{S_{k-1}(A|1j)S_{k+1}(A|1)S_k}{S_k(A|1)}-S_{k+1}S_{k-1}(A|1j)
\\
&\geq S_{k-1}(A|1j)S_k(Q_k(A|1)-Q_k),
\end{align*}
which is non-negative from $A_{11}<0$ and the concavity of $Q_k$. 
\newline
Then, for the whole term we have
\begin{align}\label{3.5.}
\dfrac{2}{w^2(1+w)\ln u_1}\sum_{i}\dfrac{\partial Q_k}{\partial A_{1i}}u_{1i}u_{ii}&\geq-\dfrac{2}{w^3(1+w)}\sum_{i>1}\dfrac{u_{1i}^2}{\ln u_1}\dfrac{\partial Q_k}{\partial A_{11}}
\\ \notag
&\geq-2\dfrac{u_1^2\ln u_1}{w^3(1+w)}\sum_{i>1}\dfrac{\rho_i^2}{\rho^2}\sum_{j\geq 1}\dfrac{\partial Q_k}{\partial a_{jj}}
\\ \notag
&\geq -8\dfrac{u_1^2\ln u_1}{w^3(1+w)\rho^2}\sum_{j\geq 1}\dfrac{\partial Q_k}{\partial a_{jj}}.
\end{align}
Finally, for the second term in \eqref{C flow} we use Equation \eqref{4.1.11} to obtain,
\begin{align*}
\dfrac{\partial Q_k}{\partial a_{11}}\left(\dfrac{2u_{1}^2}{w^2}-\left(1+\dfrac{2}{\ln u_1}\right)\right)\dfrac{u_{11}^2}{w^3u_1^2\ln u_1}&\geq \dfrac{\partial Q_k}{\partial a_{11}}\left(\dfrac{2u_{1}^2}{w^2}-\left(1+\dfrac{2}{\ln u_1}\right)\right)\dfrac{u_1^2\ln u_1}{w^3}\dfrac{\varphi'^2}{4\varphi^2}
\\
&\geq\dfrac{\partial Q_k}{\partial a_{11}}\left(\dfrac{2u_{1}^2}{w^2}-\left(1+\dfrac{2}{\ln u_1}\right)\right)\dfrac{u_1^2\ln u_1}{w^3}\dfrac{1}{16M^2}
\\
&\geq\dfrac{C_2(k,n)}{64M^2}\dfrac{u_1^2\ln u_1}{w^3}\sum_{i\geq 1}\dfrac{\partial Q_k}{\partial a_{ii}},
\end{align*}
where $C_2(k,n)$ is the constant given in \eqref{4.1.16}.
\newline
Note that we have chosen $c_0$ big enough such that $u_1>c_0$ and $\ln u_1>0$. Hence, it follows that
\begin{align}\label{3.2.A}
C\geq \sum_{i\geq 1}\dfrac{\partial Q_k}{\partial a_{ii}}\left(\dfrac{C_2(k,n)}{64M^2}\dfrac{u_1^2\ln u_1}{w^3}-C\dfrac{u_1^2\ln u_1}{w^3(w+1)}\dfrac{4}{\rho^2}\right)+\dfrac{u_{t1}}{wu_1\ln u_1}.
\end{align}
Combining  the lower bounds from \eqref{3.2 B} and \eqref{3.2.A} we get,
\begin{align*}
0&\geq\dfrac{u_{t1}}{wu_1\ln u_1}+u_t\dfrac{\varphi'}{w\varphi}
\\
&+\sum_{i}\dfrac{\partial Q_k}{\partial a_{ii}}\left(-\dfrac{4r}{\rho w^3}\dfrac{u_1^2}{M}-\dfrac{2}{wr^2\rho}+\dfrac{C_2(k,n)}{64M^2}\dfrac{u_1^2\ln u_1}{w^3}-C\dfrac{u_1^2\ln u_1}{w^3(w+1)}\dfrac{4}{\rho^2}\right).
\end{align*}
Moreover, by Equation \eqref{4.1.7}, we have 
\begin{align}\label{4.1.13}
\dfrac{\varphi'}{\varphi}u_t+\dfrac{u_{t1}}{u_1\ln u_1}\geq-\dfrac{1}{t}.
\end{align}
Then, after using Equation \eqref{4.1.13} and multiplying by $w^2\left(\sum\limits_{i\geq 1}\dfrac{\partial Q_k}{\partial a_{ii}}\right)^{-1}$, we obtain 
\begin{align*}
M^2\left(C\dfrac{\ln u_1}{(w+1)}\dfrac{4}{\rho}+\dfrac{k+1}{n-k}\dfrac{w^2\rho}{u_1^2t}+\dfrac{4r}{M}+\dfrac{2w^2}{u_1^2r^2}\right)\geq \dfrac{C_2}{64}\rho\ln u_1.
\end{align*}
Consequently, we obtain  
\begin{align*}
M^2\left(C\dfrac{4r}{M}+\left(1+\dfrac{r^2}{M^2} \right)\dfrac{1}{c_3t}+\dfrac{4r^2}{M^2}+\dfrac{1}{r^2}\left(1+\dfrac{r}{M}\right)\right)\geq \dfrac{c_2}{64}\rho\ln u_1,
\end{align*}
here we use the assumption \eqref{4.1.12}. Note that this implies,
\begin{align*}
\ln u_1\rho\leq K\left(Mr+\dfrac{1}{t}(M^2+r^2)+r^2+\dfrac{M^2}{r^2}+\dfrac{M}{r}\right),  
\end{align*}
for a universal constant $K=K(k,n)$. Then, we have
\begin{align*}
\ln| Du(0,t)|&\leq \dfrac{t_0}{t} \dfrac{\varphi(u(x_0,t_0))}{\varphi(u(0,t))}\dfrac{\rho(x_0)\ln u_1(x_0,t_0)}{\rho(0)}
\\
&\leq K\left(\dfrac{MrT}{t}+\dfrac{M^2}{t}+\dfrac{r^2}{t}+\dfrac{Tr^2}{t}+\dfrac{TM^2}{tr^2}+\dfrac{TM}{tr}\right).
\end{align*} 
Since we  assumed that $u_1\geq c_0$ and $G(x_0,t_0)\geq 16\dfrac{MT}{r}$, we finally obtain 
\begin{align*}
    |Du(0,t)|\leq \mbox{exp}\left(K+\dfrac{KMT}{rt}\dfrac{MrT}{t}+\dfrac{M^2}{t}+\dfrac{(T+1)r^2}{t}+\dfrac{TM^2}{tr^2}+\dfrac{TM}{tr}\right).
\end{align*}
\end{proof}

Recall that a $Q_k$-translator is a surface that evolves by translations with unit speed, hence we may use the same method to obtain a local gradient estimate for graphical solutions to Equation \eqref{4.1.}.

\begin{theorem}\label{T 34}
Let $r>0$ and $u\in\mathcal{C}^3\left(B(0,r)\right)$ be a solution \eqref{4.1} such that the principal curvatures of $\mbox{graph}(u)$ lies in $\Gamma_{k+1}$. Then it holds 
\begin{align*}
    |Du(0)|\leq\exp\left(\dfrac{CM}{r}+\dfrac{CM^2}{r^2}\right),
\end{align*}
where $M=\sup\limits_{B(0,r)}u$ and $C=C(k,n)$. 
\end{theorem}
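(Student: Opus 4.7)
The plan is to adapt the proof of Proposition~\ref{P 3.2} by using a time-independent analogue of the test function there, exploiting the fact that the translator equation (\ref{4.1.}) is structurally the flow equation (\ref{4.1}) with $u_t \equiv 1$.

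After vertical translation we may assume $u > 0$ on $B(0,r)$, and set $M = \sup_{B(0,r)} u$. Consider
\begin{align*}
G(x, \xi) = \rho(x)\,\varphi(u(x))\,\ln u_\xi,
\qquad \rho(x) = 1 - \frac{|x|^2}{r^2}, \quad \varphi(u) = 1 + \frac{u}{M},
\end{align*}
on $B(0,r) \times \Sp^{n-1}$. Since $\rho$ vanishes on $\partial B(0,r)$, the maximum of $G$ is attained at an interior point $(x_0, \xi_0)$; by rotating coordinates assume $\xi_0 = \eps_1$, and then choose adapted coordinates at $x_0$ so that $u_j(x_0) = 0$ for $j > 1$ and $u_{ij}(x_0) = 0$ for $i \neq j$ with $i, j \geq 2$. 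We may also assume the maximum is so large that $u_1(x_0) \geq 8M/(r\rho(x_0))$ and $\ln u_1(x_0) \geq 1$, since otherwise the sought estimate is immediate.

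Mimicking the computation leading to (\ref{4.1.11}), the first order condition $(\ln G)_i(x_0) = 0$ combined with this lower bound on $u_1$ forces $u_{11}(x_0) < 0$, placing us in the hypotheses of Lemma~\ref{L 3.1} at $x_0$. From $Q_{k,ij}(\ln G)_{ij}(x_0) \leq 0$ we decompose as $B + C \leq 0$ in the notation of (\ref{2 terminos}). The term $B$ is estimated exactly as in (\ref{B flow})--(\ref{3.2 B}): the only change is that the identity $\sum_{i,j} Q_{k,ij} u_{ij} = Q_k = 1/w$, coming from (\ref{4.1.10}) and equation (\ref{4.1.}), replaces the $u_t/w$ that appeared there; this contribution is positive and can be dropped.

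For the term $C$, the key ingredient is the $\eps_1$-derivative of equation (\ref{4.1.}), which now reads
\begin{align*}
\sum_{i,j} \frac{\partial Q_k}{\partial A_{ij}} A_{ij, 1} = -\frac{u_1 u_{11}}{w^3}.
\end{align*}
This replaces the $u_{t1}/w - u_t u_1 u_{11}/w^3$ of the flow case; because $u_{11} < 0$ and $u_1 > 0$ this single term carries a favorable sign and contributes non-negatively to $C/(u_1 \ln u_1)$. The remaining pieces of $C$ arising from expanding $A_{ij,1}$ in terms of $u_{111}, u_{1i1}, u_{ii1}$ are handled verbatim as in (\ref{C flow})--(\ref{3.2.A}) via Lemma~\ref{L2.6} and the concavity of $Q_k$. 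Combining the two lower bounds and dividing through by $\sum_i \partial Q_k/\partial A_{ii} > 0$, then using (\ref{4.1.14}) and (\ref{4.1.16}), one obtains
\begin{align*}
\rho(x_0) \ln u_1(x_0) \leq K\left(\frac{M}{r} + \frac{M^2}{r^2}\right),
\end{align*}
for some $K = K(k, n)$. Using the maximality $G(0, Du(0)/|Du(0)|) \leq G(x_0, \eps_1)$, $\rho(0) = 1$ and $\varphi(u(0))/\varphi(u(x_0)) \in [\tfrac{1}{2}, 1]$ converts this into the claimed bound on $|Du(0)|$.

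The main obstacle is tracking how each term of Proposition~\ref{P 3.2} involving $u_t$, $u_{t1}$, or $1/t$ specializes to the translator: wherever those terms appeared one must verify that the corresponding contribution either vanishes, collapses to $Q_k = 1/w$, or is replaced by the sign-favorable $-u_1 u_{11}/w^3$ from differentiating (\ref{4.1.}). The absence of any $1/t$ contribution from $(\ln G)_t$ is what removes the time-dependent factors visible in (\ref{4.1.0}) and yields the stated purely spatial bound.
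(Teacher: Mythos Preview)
Your proposal is correct and follows essentially the same route as the paper. The paper makes two cosmetic changes you do not: it switches the cut-off to $\rho(x)=r^{2}-|x|^{2}$ (a harmless rescaling of your $\rho$), and in analysing $C$ it observes that the term $-u_{1}u_{11}/w^{3}$ you flag as non-negative in fact cancels \emph{exactly} against the correction $\tfrac{u_{1}u_{11}}{w^{2}}Q_{k}=\tfrac{u_{1}u_{11}}{w^{3}}$ produced when (\ref{4.1.10}) is applied to the expansion of $A_{ij,1}$, so the net contribution of this piece to $C$ is zero rather than strictly positive---but this only sharpens, never invalidates, your lower bound.
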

\begin{proof}
This proof is very similar from the given in Proposition \ref{P 3.2}, for this reason we only point out the main differences from it. 
\newline
First, we note that equation \eqref{4.1.} can be written as
\begin{align}\label{4.2}
Q_k(A)=\dfrac{1}{w},
\end{align}
where $A$ is the matrix given in \eqref{4.0}.  
\newline
Secondly, we use the same test function $G(x,\xi)$ given in \eqref{test 1} without the time factor.  We also change the cut off function $\rho$ by
\begin{align*}
   \rho(x)=r^2-|x|^2.
\end{align*}
As before we may assume that the maximum of $G$ is reached at some point $x_0\in B_r(0)$. We also apply the same change of coordinates as we did before. Now, if we want to use equations from Lemma \ref{L 3.1}, we need to ensure that $u_{11}<0$ at $x_0$.  For this purpose, we assume  
\begin{align*}
G=\rho\varphi\ln u_1\geq 16rM.
\end{align*}
Then, it follows that $u_1\geq \dfrac{8rM}{\rho}$ and $\dfrac{\varphi'}{\varphi}\geq\dfrac{1}{2M}$, which also implies
\begin{align*}
\abs{\dfrac{\rho_j}{\rho}}<\dfrac{2r}{\rho}\leq \dfrac{\varphi'}{2\varphi}u_1.
\end{align*}
Finally we get,
\begin{align*}
u_{11}=u_1\ln u_1\left(-\dfrac{\rho_1}{\rho}-\dfrac{\varphi'}{\varphi}u_1\right)\leq -u_1^2\dfrac{\varphi'}{2\varphi}\ln u_1<0.
\end{align*} 
Note that we again will get the terms $B$ and $C$ from Equation \eqref{2 terminos}, which we now analyze in this configuration. 
\newline
We start with $B$ and note that the only terms that change are equations \eqref{3.1.} and \eqref{3.2.}. In this case, we have
\begin{align*}
Q_{k,ij}\dfrac{\varphi'}{\varphi}u_{ij}=\dfrac{\varphi'}{\varphi}Q_k=\dfrac{\varphi'}{w\varphi}.
\end{align*}
and 
\begin{align*}
   \sum_{i,j}Q_{k,ij}\dfrac{\rho_{ij}}{\rho}=-\dfrac{2}{\rho}\sum_{i\geq 1}Q_{k,ii}=-\dfrac{2}{\rho}\left(\dfrac{1}{w^3}\dfrac{\partial Q_k}{\partial A_{11}}+\dfrac{1}{w}\sum_{i\geq 1}\dfrac{\partial Q_k}{\partial A_{ii}}\right)\geq -\dfrac{2}{w\rho}\sum_{i\geq 1}\dfrac{\partial Q_k}{\partial A_{ii}}.
\end{align*}
Therefore, 
\begin{align*}
    B\geq \dfrac{\varphi'}{\varphi}\dfrac{1}{w}-\dfrac{2}{w\rho}\sum_i\dfrac{\partial Q_k}{\partial A_{ii}}-\dfrac{u_1^2}{2M^2w^3}\sum_{i}\dfrac{\partial Q_k}{\partial A_{ii}}. 
\end{align*}
For the term $C$, we only need to estimate the therm $Q_{k,ij}u_{ij1}$.  We observe that
\begin{align*}
Q_{k,ij}u_{ij1}&=\dfrac{\partial Q_k}{\partial A_{11}}\dfrac{u_{111}}{w^3} + 2\sum_i\dfrac{\partial Q_k}{\partial A_{1i}}\dfrac{u_{1i1}}{w^2} \sum_{i}\dfrac{\partial Q_k}{\partial A_{11}}\dfrac{u_{ii1}}{w}
\\
&=\underbrace{-\dfrac{u_1u_{11}}{w^3}+\dfrac{u_1u_{11}}{w^2}Q_k}_{=0}+\dfrac{\partial Q_k}{\partial A_{11}}\left(\dfrac{2u_{1}}{w^5}u_{11}^2+\dfrac{2u_1}{w^3(1+w)}\sum_{j}u_{1j}^2\right)+\sum_{i}\dfrac{\partial Q_k}{\partial A_{ii}}\dfrac{2u_1}{w^2(1+w)}u_{1i}^2
\\
&\hspace{.5cm}+2\sum_{i}\dfrac{\partial Q_k}{\partial A_{1i}}\left(\dfrac{u_1}{w^4}u_{11}u_{1i}+\dfrac{u_1}{w^2(1+w)}u_{1i}u_{ii}+\dfrac{u_1}{w^3(w+1)}u_{11}u_{i1}\right). 
\end{align*}
Then, by using the same bounds given in Equation \eqref{C flow}, it follows that
\begin{align*}
C\geq\dfrac{\partial Q_k}{\partial A_{11}}\left(\dfrac{2u_{1}^2}{w^2}-\left(1+\dfrac{2}{\ln u_1}\right)\right)\dfrac{u_{11}^2}{w^3u_1^2\ln u_1}
+\dfrac{2}{w^2(1+w)\ln u_1}\sum_{i}\dfrac{\partial Q_k}{\partial A_{1i}}u_{1i}u_{ii}.
\end{align*}
Now, from Equation \eqref{3.5.}, we have
\begin{align*}
\dfrac{2}{w^2(1+w)\ln u_1}\sum_{i}\dfrac{\partial Q_k}{\partial a_{1i}}u_{1i}u_{ii}&\geq-\dfrac{2u_1^2\ln u_1}{w^3(1+w)}\sum_{i>1}\dfrac{\rho_i^2}{\rho^2}\sum_{j\geq 1}\dfrac{\partial Q_k}{\partial a_{jj}}
\geq -\dfrac{8u_1^2\ln u_1}{w^3(1+w)\rho}\sum_{j\geq 1}\dfrac{\partial Q_k}{\partial a_{jj}},
\end{align*}
and 
\begin{align*}
\dfrac{\partial Q_k}{\partial a_{11}}\left(\dfrac{2u_{1}^2}{w^2}-\left(1+\dfrac{2}{\ln u_1}\right)\right)\dfrac{u_{11}^2}{w^3u_1^2\ln u_1}&\geq \dfrac{\partial Q_k}{\partial a_{11}}\left(\dfrac{2u_{1}^2}{w^2}-\left(1+\dfrac{2}{\ln u_1}\right)\right)\dfrac{u_1^2\ln u_1}{w^3}\dfrac{\varphi'^2}{4\varphi^2}
\\
&\geq\dfrac{\partial Q_k}{\partial a_{11}}\left(\dfrac{2u_{1}^2}{w^2}-\left(1+\dfrac{2}{\ln u_1}\right)\right)\dfrac{u_1^2\ln u_1}{w^3}\dfrac{1}{16M^2}
\\
&\geq\dfrac{C_2(k,n)}{64M^2}\dfrac{u_1^2\ln u_1}{w^3}\sum_{i\geq 1}\dfrac{\partial Q_k}{\partial a_{ii}},
\end{align*}
where $C_2(k,n)$ is the constant in \eqref{4.1.16}. Hence, it follows that
\begin{align*}
C\geq \sum_{i\geq 1}\dfrac{\partial Q_k}{\partial a_{ii}}\left(\dfrac{C_2(k,n)}{64M^2}\dfrac{u_1^2\ln u_1}{w^3}-\dfrac{8u_1^2\ln u_1}{w^3(w+1)\rho}\right).
\end{align*}
Then, by adding the bounds from the estimates from $B$ and $C$,  we obtain
\begin{align*}
0&\geq B+A
\\
&\geq  \sum_{i\geq 1}\dfrac{\partial Q_k}{\partial a_{ii}}\left(\dfrac{C_2(k,n)}{64M^2}\dfrac{u_1^2\ln u_1}{w^3}-8\dfrac{u_1^2\ln u_1}{w^3(w+1)\rho}-\dfrac{2}{w\rho} -\dfrac{u_1}{2M^2w^3}\right),
\end{align*} 
or equivalently,
\begin{align*}
\dfrac{C_2}{64M^2}\ln u_1\rho\leq 8\dfrac{\ln u_1}{(w+1)}+\dfrac{2w^2}{u_1^2}+\dfrac{\rho}{2M^2u_1}, 
\end{align*}
which leads to 
\begin{align*}
\rho\ln u_1\leq C(k,n)\left(M^2+Mr\right).
\end{align*}
Finally,
\begin{align*}
\ln|Du(0)|&\leq\dfrac{\varphi(u(x_0))\rho(x_0)\ln u_1(x_0)}{\varphi(u(0))\rho(0)}\leq C(k,n)\left(\dfrac{M^2}{r^2}+\dfrac{M}{r}\right).
\end{align*} 
\end{proof}

As a consequence of Theorem \ref{T 34} we obtain a non-existence result for graphical $Q_k$-translator. 
\begin{theorem}\label{T 36}
There are no solutions $u\in\mathcal{C}^3(\rr^n)$ to Equation \eqref{4.1.} such that 
\begin{enumerate}
	\item The principal curvatures of the graph of $u$ lies in $\Gamma_{k+1}$.
	\item  $u(x)=o(|x|)\mbox{ as }|x|\to\infty$.
\end{enumerate} 
\end{theorem}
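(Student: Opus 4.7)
The plan is to combine the local gradient estimate from Theorem~\ref{T 34} with a divergence identity for the mean curvature of a graph. In short: $u(x)=o(|x|)$ together with the estimate forces $|Du|\le 1$ everywhere, which via Corollary~\ref{C 2.7} gives a uniform positive lower bound on the mean curvature, and a divergence theorem argument on $B(0,R)$ then produces a contradiction because $\int H\,dx$ is forced to grow like the volume $R^n$ while it is controlled by the surface area $R^{n-1}$.

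For the first step, fix any $x_0\in\rr^n$. Equation~\eqref{4.1.} is invariant under translations of $\rr^n$ and under vertical shifts $u\mapsto u+c$, since $Q_k$ depends only on $D^2u$ and $w$ only on $Du$. Hence the translated function $\tilde u(y)=u(y+x_0)+c$ solves the same equation and can be made positive on $B(0,r)$ by choosing $c$ suitably. Applying Theorem~\ref{T 34} to $\tilde u$ gives
\begin{align*}
|Du(x_0)|=|D\tilde u(0)|\le \exp\!\pare{\frac{CM_r}{r}+\frac{CM_r^{2}}{r^{2}}},
\end{align*}
where $M_r$ may be taken as $\mbox{osc}_{B(x_0,r)}u$ up to an arbitrarily small positive quantity and $C=C(k,n)$. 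The hypothesis $u(x)=o(|x|)$ implies $\sup_{B(x_0,r)}|u|=o(r)$ as $r\to\infty$ for each fixed $x_0$, so both $M_r/r$ and $M_r^{2}/r^{2}$ tend to $0$. Letting $r\to\infty$ yields $|Du(x_0)|\le 1$ for every $x_0\in\rr^n$, i.e., $|Du|\le 1$ on all of $\rr^n$.

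In the second step, Equation~\eqref{4.1.} reads $Q_k=1/w$ with $w=\sqrt{1+|Du|^{2}}\le \sqrt 2$, so $Q_k\ge 2^{-1/2}$. Applying Corollary~\ref{C 2.7} with $l=0$ gives $H=Q_0\ge \frac{n(k+1)}{n-k}Q_k\ge c_0>0$ everywhere on $\rr^n$, where $c_0=c_0(n,k)$. Recalling that for the graph of $u$ the mean curvature is $H=\mbox{div}\pare{Du/w}$, the divergence theorem applied on $B(0,R)$ yields
\begin{align*}
c_0\,\omega_n R^{n}\le \int_{B(0,R)}H\,dx=\int_{\partial B(0,R)}\frac{Du\cdot\hat n}{w}\,dS\le n\,\omega_n R^{n-1},
\end{align*}
where $\hat n$ denotes the outer unit normal to $\partial B(0,R)$ and we used that $|Du|/w<1$ pointwise. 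This forces $c_0 R\le n$ for every $R>0$, which is impossible, proving the theorem.

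The main technical point is the first step: one must carefully check that the vertical shift needed to apply Theorem~\ref{T 34} does not spoil the growth bound, and that $M_r=o(r)$ follows from $u=o(|x|)$ for balls not centered at the origin. Once the global bound $|Du|\le 1$ is established, the remainder of the argument is a routine divergence-theorem computation.
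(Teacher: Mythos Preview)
Your proof is correct and takes a genuinely different route from the paper's. Both arguments begin the same way, applying Theorem~\ref{T 34} on balls of growing radius together with the hypothesis $u=o(|x|)$ to obtain a global gradient bound. From there the paper only records $|Du|\le C_1$ and then runs a \emph{second} maximum-principle argument with a new test function $G(x)=\rho(x)\,g(u)\,|\nabla u|$, where $g(u)=(1-u/M)^\beta$ for a carefully chosen $\beta\in(-\tfrac14,-\tfrac18)$; this yields $u_1(0)^2\le C(k,n,\delta_1)M^2/r^2\to 0$, forcing $Du\equiv 0$ and hence $u$ constant, which contradicts $\lambda\in\Gamma_{k+1}$. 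You instead push the first step to its sharp limit $|Du|\le 1$, then use Corollary~\ref{C 2.7} with $l=0$ to get the uniform lower bound $H\ge \tfrac{n(k+1)}{(n-k)\sqrt 2}$ and finish with the divergence identity $H=\dive{}{Du/w}$ integrated over $B(0,R)$. Your approach is shorter and more elementary, avoiding the delicate second barrier construction entirely and exploiting the translator equation more directly; the paper's method, on the other hand, actually proves the stronger intermediate conclusion $Du\equiv 0$ and would transfer more readily to settings where no convenient divergence form of the relevant curvature quantity is available.
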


\begin{proof}
Let $u$ be such solution to Equation (\ref{4.1}). Then, by property (2), it follows
\begin{align*}
\max\limits_{B_r}|u|\leq Cr,\:\forall r>1.
\end{align*}
Furthermore, by Theorem \ref{T 34}, we have that 
\begin{align*}
|Du(x)|\leq C_1,\mbox{ for all }x\in\rr^n. 
\end{align*}
We claim that $|Du|=0$ in $\rr^n$ and we argue this by contradiction, we assume that there exists some $\delta>0 $ such that 
\begin{align}\label{0.0}
	|Du(0)|\geq\delta.
\end{align}	 
Let $r>1$ and the test function given by
\begin{align*}
G(x)=\rho(x)g(u)|\nabla u|,
\end{align*} 
where $\rho(x)=r^2-|x|^2$, $g(u)=\left(1-\dfrac{u}{M}\right)^\beta$ and $M=\max\limits_{B_r}u$. The constant $\beta<0$ is still to be fixed. 
\newline
Note that $G:\overline{B_r}\to\rr$ attains its maximum at an interior point $x_0$. We also choose a coordinate system such that $u_{ij}(x_0)$ is a diagonal matrix for $2\leq i,j\leq n$, 
\begin{align*}
u_1(x_0)=|\nabla u(x_0)|\mbox{ and }u_i(x_0)=0,i\geq 2.  
\end{align*}
Let $\delta_1>0$ such that 
\begin{align}\label{3.0}
\rho(x_0)\geq \delta_1r^2\mbox{ and }u_1(x_0)\geq \delta_1.
\end{align}

We have the following equations at $x_0$,
\begin{align}\label{3.1}
0&=(\ln G)_i=\dfrac{\rho_i}{\rho}+\dfrac{g'}{g}u_i+\dfrac{u_{1i}}{u_1},
\\
\label{3.2}
0&\geq Q_{k,ij}(\ln G)_{ij}
\\ \notag
&=Q_{k,ij}\left(\dfrac{\rho_{ij}}{\rho}-\dfrac{\rho_i\rho_j}{\rho^2}+\dfrac{g'}{g}u_{ij}+\left(\dfrac{g''}{g}-\left(\dfrac{g'}{g}\right)^2\right)u_iu_j+\dfrac{u_{1ij}}{u_1}-\dfrac{u_{1i}u_{1j}}{u_1^2}\right).
\end{align}
Then, by Equation \eqref{3.1}, it follows that
\begin{align*}
\dfrac{u_{11}}{u_{1}}=-\dfrac{\rho_1}{\rho}-\dfrac{g'}{g}u_1\mbox{ and }\dfrac{u_{1i}}{u_{1}}=-\dfrac{\rho_i}{\rho}.
\end{align*}
Consequently, since $\beta<0$, we may enlarge $r$ such that $u_{11}(x_0)\leq 0$, which allows us to use the equations from Lemma \ref{L 3.1}. In addition, we also have
\begin{align*}
\dfrac{u_{1i}u_{1j}}{u_1^2}\leq 2\dfrac{\rho_i\rho_j}{\rho^2}+2\left(\dfrac{g'}{g}\right)^2u_iu_j.
\end{align*}
Then, it follows that
\begin{align}\label{3.3}
0\geq Q_{k,ij}\left(\dfrac{\rho_{ij}}{\rho}-3\dfrac{\rho_i\rho_j}{\rho^2}+\dfrac{g'}{g}u_{ij}+\left(\dfrac{g''}{g}-3\dfrac{g'^2}{g^2}\right)u_iu_j\right) +Q_{k,ij}\dfrac{u_{1ij}}{u_1}.
\end{align}
First, we estimate the first two terms in \eqref{3.3},
\begin{align*}
Q_{k,ij}\left(\dfrac{\rho_{ij}}{\rho}-3\dfrac{\rho_i\rho_j}{\rho^2} \right)&=-2\dfrac{Q_{k,ii}}{\rho}-12Q_{k,ij}\dfrac{x_ix_j}{\rho^2}\geq -2\dfrac{Q_{k,ii}}{\rho}-12\dfrac{Q_{k,ii}}{\rho^2}|x|^2
\\
&\geq-2\left( \dfrac{1}{r^2\delta_1}+\dfrac{6}{\delta_1^4r^2}\right)\dfrac{1}{w}\dfrac{\partial Q_k}{\partial A_{ii}},
\end{align*}
in the last line we  use Equation (\ref{3.0}). 
\\
Secondly, we estimate the third and fourth term from (\ref{3.3}). Note that at $x_0$ the following equations holds 
\begin{align*}
Q_{k,ij}u_{ij}=Q_k,\:\dfrac{g'}{g}=-\dfrac{\beta}{M}\left(1-\dfrac{u}{M}\right)^{-1}\mbox{ and }\dfrac{g''}{g}=\dfrac{\beta(\beta-1)}{M^2}\left(1-\dfrac{u}{M}\right)^{-2}.
\end{align*}
Therefore, we can drop the term $Q_{k,ij}\dfrac{g'}{g}u_{ij}>0$ from \eqref{3.3}. 
\newline
Furthermore, at $x_0$, we estimate
\begin{align*}
\dfrac{g''}{g}-3\left(\frac{g'}{g}\right)^2
\geq \dfrac{-2\beta^2 -\beta}{M^2}\geq \dfrac{3}{32M^2},
\end{align*}
which hold for $\beta\in\left(-\dfrac{1}{4},-\dfrac{1}{8}\right)$. Then, by combining the above estimates, it yields
\begin{align*}
Q_{k,ij}\left(\dfrac{g'}{g}u_{ij}+\left(\dfrac{g''}{g}-3\dfrac{g'^2}{g^2}\right)u_iu_j\right)\geq \dfrac{-2\beta^2+\beta}{M^2}Q_{k,11}u_1^2\geq \dfrac{C(k,n)}{M^2}\dfrac{u_1^2}{w^3}\dfrac{\partial Q_k}{\partial A_{ii}},
\end{align*}
in the last inequality we use Equation \eqref{4.1.16}.

Finally, for the last term in \eqref{3.3}, we can use Equation \eqref{3.5.} to show that
\begin{align*}
\frac{Q_{k,ij}u_{ij1}}{u_1}&\geq-\frac{2u_1}{w^3(1+w)}\frac{\partial Q_k}{\partial A_{ii}}\sum_{j>1}\frac{u_{1j}^2}{u_1^2}=-\frac{2u_1}{w^3(1+w)}\frac{\partial Q_k}{\partial A_{ii}}\sum_{j>1}\frac{\rho_j^2}{\rho^2}
\\
&\geq -\dfrac{8u_1}{w^4}\dfrac{1}{\delta_1r^2}\dfrac{\partial Q_k}{\partial A_{ii}}. 
\end{align*} 
Consequently, we can estimate  \eqref{3.3} as follows
\begin{align*}
\dfrac{8u_1}{w^4}\dfrac{1}{\delta_1r^2}\dfrac{\partial Q_k}{\partial A_{ii}}+2\left( \dfrac{1}{r^2\delta_1}+\dfrac{6}{\delta_1^2r^2}\right)\dfrac{1}{w}\dfrac{\partial Q_k}{\partial A_{ii}}\geq  \dfrac{C(k,n)}{M^2}\dfrac{u_1^2}{w^3}\dfrac{\partial Q_k}{\partial A_{ii}}.
\end{align*}
In particular, we have that $u_1(x_0)^2\leq C(k,n,\delta_1)\frac{M^2}{r^2}$, which implies that $u_1(0)\to 0$ as $r\to\infty$, a contradiction with Equation \eqref{0.0}. Therefore, $u$ is constant but this fact contradicts property (1) from the statement of Theorem \ref{T 36}. 
\end{proof}

\section{Second order Estimates}\label{sec4}
In this section we derive interior second order estimates for solutions to the $Q_k$-flow and $Q_k$-translator equation. For this purpose, we derive local uniform estimates for $H^2$ in both settings for cases $k\geq 1$. Recall that if $M_0$ satisfies $\lambda\in\Gamma_{k+1}$ for $k\geq 1$, then $|A|^2\leq H^2$ holds on $M_0$.

In the following we will assume that $M_0$ is locally the graph of a function over a hyperplane orthogonal to $w\in \Sp^n$.
\begin{lemma}\label{L 4.1}
	Let $M_t$  be a solution to the $Q_k$-flow (\ref{I.2}). Then, we have the following equations at $p\in M_t$:
	\begin{align}
		\label{4.e}
		\left(\partial_t-\square_k\right)u&=0, \mbox{ where }u=\pI{p, w},
		\\
		\label{4.a}
		\left(\partial_t-\square_k\right)h_{ii}&=\dfrac{\partial^2 Q_k}{\partial h_{cd}\partial h_{ab}}\nabla_ih_{cd}\nabla_ih_{ab}+|\A|_k^2h_{ij}-Q_kh_{il}h_j^l,
		\\ \label{4.b}
		\left(\partial_t-\square_k\right)\pI{\nu,w}&=|\A|^2_k\pI{\nu,w},
		\\ \label{56}
		\left(\partial_t-\square_k\right)v&=-v|\A|_k^2-2\norm{\nabla v}_k^2v^{-1},\mbox{ where } v=\pI{\nu,w}^{-1},
		\\ \label{57}
		\left(\partial_t-\square_k\right)H&=\dfrac{\partial^2 Q_k}{\partial h^d_{c}\partial h^b_{a}}\nabla^i h^d_{c}\nabla_i h^b_{a}+|\A|_k^2H,
		\\  \label{58}
		\left(\partial_t-\square_k\right)Q_k&=|\A|_k^2Q_k
		\\ \label{59}
		\left(\partial_t-\square_k\right)r^2&\leq 0,\mbox{ where }r^2=|p|^2+2\dfrac{n-k}{k+1}t,
		\end{align}
where $Q_{k,ij}=\dfrac{\partial Q_k}{\partial h_{ij}}$, $\square_kf=\sum\limits_{i,j}Q_{k,ij}\nabla_i\nabla_jf$,$\norm{X}_k^2=\pI{X,X}_k$, $\pI{X,Y}_k=\sum_{i,j}Q_{k,ij}X^iY^j$ and  $|\A|^2_k=\sum\limits_{i,j,l}Q_{k,ij}h_{il}h_{lj}$.  
\end{lemma}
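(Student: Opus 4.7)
Each of the seven identities follows from a direct computation using the parametric evolution $\partial_t F = Q_k\nu$ together with the induced variations of the metric and the normal, namely $\partial_t g_{ij} = 2Q_k h_{ij}$ and $\partial_t \nu = -\nabla Q_k$. Two algebraic inputs will be used repeatedly: the $1$-homogeneity of $Q_k$ in the principal curvatures, which via Euler's identity gives $\sum_{i,j} Q_{k,ij}h_{ij} = Q_k$, and Simons' identity, which commutes covariant derivatives of $h_{ij}$. For (\ref{4.e}) I would work in normal coordinates at the evaluation point, where the Gauss formula $\partial_i e_j = \Gamma^k_{ij} e_k + h_{ij}\nu$ yields $\nabla_i u = \langle e_i, w\rangle$ and $\nabla_i\nabla_j u = h_{ij}\langle \nu,w\rangle$; since $\partial_t u = Q_k\langle \nu,w\rangle$, Euler's identity gives $\square_k u = Q_k\langle \nu,w\rangle = \partial_t u$.

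For (\ref{4.a}) I would start from the classical formula $\partial_t h_{ij} = \nabla_i\nabla_j Q_k - Q_k h_i^l h_{lj}$, expand
$\nabla_i\nabla_j Q_k = Q_{k,ab}\nabla_i\nabla_j h_{ab} + \frac{\partial^2 Q_k}{\partial h_{ab}\partial h_{cd}}\nabla_i h_{ab}\nabla_j h_{cd}$,
and then apply Simons' identity to rewrite $\nabla_i\nabla_j h_{ab}$ as $\nabla_a\nabla_b h_{ij}$ plus a collection of curvature commutator terms. The Gauss equation converts those commutators into quadratic expressions in $h$; after contracting with $Q_{k,ab}$ and invoking Euler's identity at the appropriate step, the surviving curvature contribution collapses to exactly $|\A|_k^2 h_{ij} - Q_k h_{il}h^l_j$. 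Once (\ref{4.a}) is in hand, (\ref{57}) follows by tracing and using $\partial_t g^{ij} = -2Q_k h^{ij}$, while (\ref{58}) comes from $\partial_t Q_k = Q_{k,ij}\partial_t h_{ij}$ combined with $\square_k Q_k = Q_{k,ij}\square_k h_{ij} + Q^{ab,cd}_k Q_{k,ij}\nabla_i h_{ab}\nabla_j h_{cd}$, noticing that the concavity term cancels, so that the right-hand side of (\ref{4.a}) contracts via Euler's identity into $|\A|_k^2 Q_k$.

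For (\ref{4.b}), I would use $\partial_t\nu = -\nabla Q_k$ to get $\partial_t\langle\nu,w\rangle = -\nabla^a Q_k\,\langle e_a,w\rangle$, and compute $\nabla_i\langle\nu,w\rangle = -h_i^a\langle e_a,w\rangle$ and $\nabla_i\nabla_j\langle\nu,w\rangle = -\nabla_j h_i^a\langle e_a,w\rangle - h_i^a h_{ja}\langle\nu,w\rangle$. Contracting with $Q_{k,ij}$ and using the Codazzi-based identity $Q_{k,ij}\nabla_j h_i^a = \nabla^a Q_k$ makes the tangential term match $\partial_t\langle\nu,w\rangle$, leaving precisely $|\A|_k^2\langle\nu,w\rangle$. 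Then (\ref{56}) for $v = \langle\nu,w\rangle^{-1}$ is a chain-rule consequence of the identity $\square_k \phi^{-1} = -\phi^{-2}\square_k\phi + 2\phi^{-3}\|\nabla\phi\|_k^2$ applied to $\phi = \langle\nu,w\rangle$.

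Finally, for (\ref{59}) I would compute $\partial_t|F|^2 = 2Q_k\langle F,\nu\rangle$ and $\nabla_i\nabla_j|F|^2 = 2g_{ij} + 2h_{ij}\langle F,\nu\rangle$, so that Euler's identity gives $\square_k|F|^2 = 2\operatorname{tr}(Q_k) + 2Q_k\langle F,\nu\rangle$, and hence $(\partial_t - \square_k)|F|^2 = -2\operatorname{tr}(Q_k)$, where $\operatorname{tr}(Q_k):=\sum_i Q_{k,ii}$. To control this trace I would invoke Lemma \ref{L 2.5}, which rewrites $\operatorname{tr}(Q_k) = (n-k) - (n-k+1)\,Q_k/Q_{k-1}$, and Corollary \ref{C 2.7} with $l = k-1$, which yields $Q_k/Q_{k-1} \leq \tfrac{k(n-k)}{(k+1)(n-k+1)}$; together these imply $\operatorname{tr}(Q_k)\geq \tfrac{n-k}{k+1}$, and adding the $t$-derivative of the correction term $2\tfrac{n-k}{k+1}t$ produces (\ref{59}). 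The main obstacle is (\ref{4.a}): one must keep precise track of the Simons commutator pieces so that, after contraction with $Q_{k,ab}$ and one use of Euler's identity, the coefficient of $h_{ij}$ is exactly $|\A|_k^2$ and the residual term is exactly $-Q_k h_{il}h^l_j$; every other identity in the lemma is either a direct corollary of (\ref{4.a}) or a purely algebraic manipulation as sketched above.
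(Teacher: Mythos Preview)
Your plan is correct and aligns with the paper's treatment. The paper itself proves only \eqref{59} explicitly---via exactly the computation you outline, using Lemma~\ref{L 2.5} and Corollary~\ref{C 2.7} to obtain $\sum_i\partial Q_k/\partial A_{ii}\geq (n-k)/(k+1)$---and defers \eqref{4.e}--\eqref{58} to \cite{Choi-Daskalopoulos_2016} and \cite{dieter_2005}; your sketches reproduce the standard arguments found in those references. One small caveat: in your route to \eqref{58} by contracting \eqref{4.a} with $Q_{k,ij}$, the two curvature terms $|\A|_k^2\,Q_{k,ij}h_{ij}$ and $-Q_k\,Q_{k,ij}h_{il}h^l_j$ cancel by Euler and the definition of $|\A|_k^2$, so the surviving $|\A|_k^2 Q_k$ must come from the metric variation (equivalently, from working with the mixed tensor $h^i_{\ j}$ rather than $h_{ij}$), just as you correctly invoke $\partial_t g^{ij}$ for \eqref{57}.
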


\begin{proof}
 The proofs of equations \eqref{4.e}-\eqref{56} can be found in \cite{Choi-Daskalopoulos_2016}, and for equations \eqref{57} , \eqref{58} can be found in \cite{dieter_2005}. 
 \newline
  For Equation \eqref{59}, we use a normal frame at $p\in M_t$ given by $e_i\in T_pM_t$. Then, it follows 
	\begin{align*}
		\partial_tr^2&=2\pI{Q_k\nu,p}+2\dfrac{n-k}{k+1},
		\\
		\square_kr&=2Q_{k,ij}\delta^i_j+2\pI{Q_k\nu,p}.
	\end{align*}
	Therefore, by Equation \eqref{2.5.1} together with Corollary \ref{C 2.7}, we have that $(\partial_t-\square_k)r\leq 0$ since $Q_{k,ij}\delta^i_j\geq \frac{n-k}{k+1}$.    
\end{proof}

\begin{remark}
	Note that equation \eqref{56} implies that the assumption imposed  to $M_0$ is still valid under the $Q_k$-flow if the domain of $v$ lies in the support of $R^2-r^2$, where $R$ depends on the domain  of definition of $u$. 
\end{remark}

In \cite{Choi-Daskalopoulos_2016} Theorem 2.4, the authors give a  gradient estimate for the $Q_k$-flow of the form 
\begin{align*}
	\phi v \leq\sup_{M_0}\phi(p,0) v(p,0),
\end{align*}	
for a convex $M_0$ initial data such that $Q_k>0$. We give a similar estimate with the condition $\lambda\in\Gamma_{k+1}$ instead. 
\begin{proposition}
	Let $M_t$ be a solution to \eqref{I.2} such that $\lambda\in \Gamma_{k+1}$. Then, for  $R>0$ and $x_0\in\rr^{n+1}$, the following estimate holds
	\begin{align*}
		\phi_+(x,t)v(x,t)\leq \sup_{M_0}\phi_+(x,0)v(x,0),
	\end{align*}	 
	where $\phi_+=\max\{\phi,0\}$, 
	$\phi(x_0,t_0)=R^2-|x-x_0|^2-2\frac{n-k}{k+1}t$ and $v$ is defined in the support of $\phi_+$. 
\end{proposition}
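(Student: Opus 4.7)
The plan is to mimic Ecker-Huisken's parabolic maximum principle argument: apply it to $f := \phi v$ on the time-dependent set $\{\phi > 0\}\cap M_t$ and show that the spacetime supremum of $f$ is attained at $t = 0$. The required evolution equations for both factors are already available in Lemma \ref{L 4.1}.

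Concretely, I would begin with the product rule
\begin{align*}
(\partial_t - \square_k)(\phi v) = \phi (\partial_t - \square_k) v + v (\partial_t - \square_k) \phi - 2 \pI{\nabla \phi, \nabla v}_k,
\end{align*}
and substitute equation \eqref{56} together with a repetition of the calculation behind equation \eqref{59}, applied with the shifted origin $x_0$, which yields
\begin{align*}
(\partial_t - \square_k)\phi = 2\Bigl(\sum_i Q_{k,ii} - \tfrac{n-k}{k+1}\Bigr) \geq 0
\end{align*}
by Corollary \ref{C 2.7}. Suppose now that $\phi v$ attains an interior positive maximum at some $(x_*, t_*)$ with $t_* > 0$. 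The first-order condition $\nabla(\phi v) = 0$ forces $\nabla \phi = -(\phi/v)\nabla v$, so the cross term exactly cancels the gradient contribution $-2\phi\norm{\nabla v}_k^2 v^{-1}$ from $(\partial_t - \square_k)v$, leaving
\begin{align*}
0 \leq (\partial_t - \square_k)(\phi v)\big|_{(x_*,t_*)} = -\phi v \abs{\A}_k^2 + v (\partial_t - \square_k)\phi.
\end{align*}

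The main obstacle lies in extracting a contradiction from this inequality. For the mean curvature flow case ($k=0$) the quantity $\sum_i Q_{0,ii} \equiv n = (n-k)/(k+1)$ is constant, so $(\partial_t - \square_k)\phi \equiv 0$ and the inequality collapses to $\phi v \abs{\A}^2 \leq 0$, giving the desired contradiction. For $k \geq 1$ the excess $E := \sum_i Q_{k,ii} - (n-k)/(k+1)$ is typically strictly positive, and one must absorb $2 v E$ into $\phi v \abs{\A}_k^2$. My first attempt would combine the identity $\nabla_i v = v^2 h_{ij}\pI{e_j, w}$, obtained by differentiating $v = \pI{\nu, w}^{-1}$, with the critical-point relation $\nabla_i v = (2v/\phi)\pI{e_i, p-x_0}$ to extract a pointwise relation between the shape operator and $p - x_0$ at $(x_*, t_*)$; the concavity of $Q_k$ together with the Newton-MacLaurin inequality in Lemma \ref{L2.6} should then yield $\phi\abs{\A}_k^2 \geq 2E$ under the cone condition $\lambda \in \Gamma_{k+1}$, closing the argument. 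If the direct absorption fails to close, a standard fallback is to replace $\phi$ with $e^{-\mu t}\phi$ for a universal $\mu = \mu(k,n)$ large enough that the extra $-\mu\phi v$ dominates $2vE$, which only weakens the final estimate by a bounded exponential factor.
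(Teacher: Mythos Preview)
The approaches differ substantially. The paper does not work with $\phi v$ at a critical point; instead it applies the maximum principle to the \emph{squared} quantity $\eta v^2$ with $\eta=(R^2-r^2)^2=\phi^2$, and controls the cross term $\pI{\nabla\eta,\nabla v}_k$ via Young's inequality rather than via the first-order condition. The squaring is essential: it produces the extra negative term $-2\norm{\nabla r^2}_k^2$ in $(\partial_t-\square_k)\eta$, and that term, together with $-6\eta\norm{\nabla v}_k^2$ coming from $(\partial_t-\square_k)v^2$, is exactly what the Young-inequality balancing consumes to reach an inequality of the form $(\partial_t-\square_k)(\eta v^2)\le \eta^{-1}\pI{\nabla\eta,\nabla(\eta v^2)}_k$. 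With the linear test function $\phi v$ no such gradient term is available, which is why you are forced into the absorption problem you describe.

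Your proposed absorption has a genuine gap. The claim $\phi\,|\A|_k^2\ge 2E$ at the maximum cannot follow from Newton--MacLaurin or concavity alone: the excess $E=\sum_i Q_{k,ii}-\tfrac{n-k}{k+1}$ is homogeneous of degree zero in the principal curvatures while $|\A|_k^2$ is homogeneous of degree two, so on any fixed non-umbilic configuration (where $E>0$ strictly) one can rescale the curvatures toward zero and drive $|\A|_k^2\to 0$ with $E$ unchanged; meanwhile $\phi$ is a position quantity with no a~priori lower bound at the maximum. The critical-point identity $\phi v\,h_{ij}\pI{e_j,w}=2\pI{e_i,p-x_0}$ only constrains the shape operator in the single tangent direction $w^\top$ and gives no control of $|\A|_k^2$ in the remaining directions. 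Your fallback $e^{-\mu t}\phi$ does close, but it yields $\phi_+ v\le e^{\mu t}\sup_{M_0}\phi_+ v$, which is strictly weaker than the proposition as stated. As a minor point, even for $k=0$ the conclusion at the maximum is $(\partial_t-\square_k)(\phi v)\le 0$, not a contradiction; monotonicity of $\sup\phi_+ v$ then follows by Hamilton's trick rather than by contradiction.
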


\begin{proof}
	First, we note that without loosing generality its enough to consider $x_0=0$.  Moreover, let  
	\begin{align*}
		\eta(r)=(R^2-r^2)^2,
	\end{align*}
where $r^2=|x|^2+2\frac{n-k}{k+1}t$ as in Equation \eqref{59}. Then, it follos that
\begin{align}\label{4.3}
	\pare{\partial_t-\square_k}\eta v^2=v^2\pare{\partial_t-\square_k}\eta+2v\eta\pare{\partial_t-\square_k}v-2\eta\norm{\nabla v}^2_k-4v\pI{\nabla \eta,\nabla v}_k.
\end{align}
On one hand, we have 
\begin{align*}
(\partial_t-\square_k)\eta=2(R^2-r^2)\pare{\partial_t-\square_k}r^2-2\norm{\nabla r^2}_k^2\leq -2\norm{\nabla |x|^2}_k^2. 	
\end{align*}
Then, by apllying this equation together with Equation \eqref{56} onto Equation \eqref{4.3}, it follows that
\begin{align}\label{4.31}
	\pare{\partial_t-\square_k}\eta v^2\leq -2v^2\norm{\nabla |x|^2}_k^2-6\eta\norm{\nabla v}_k^2-2|\A|_k^2v^2 -4v\pI{\nabla \eta,\nabla v}_k.
\end{align}	
On the other hand, we have the term
\begin{align*}
	-4v\pI{\nabla \eta,\nabla v}_k=-6v\pI{\nabla\eta,\nabla v}_k+\eta^{-1}\pI{\nabla \eta,\nabla (\eta v^2)}_k-\eta^{-1}v^2\norm{\nabla\eta}_k^2,
\end{align*}	
then it holds
\begin{align}\label{4.32}
\pare{\partial_t-\square_k}\eta v^2\leq -6v^2\norm{\nabla |x|^2}_k^2-6\pI{\nabla\eta,\nabla v}_k-6\eta\norm{\nabla v}_k^2. 
\end{align}		 	
Finally, for the term
\begin{align*}
	-6v\pI{\nabla\eta,\nabla v}_k&=-6v\eta'Q_{k,ij}\nabla_i|x|^2\nabla_jv
	\\
	&\leq 6Q_{k,ij}\pare{(\eta' )^2\eta^{-1}\eps\dfrac{\nabla_i |x|^2\nabla_j |x|^2}{2}v^2+\eta\dfrac{\nabla_iv\nabla_jv}{2\eps}}
	\\
	&=6Q_{k,ij}\pare{\nabla_i |x|^2\nabla_j |x|^2v^2+\eta\nabla_iv\nabla_jv},	
\end{align*}	
where in the second line we use Young Inequality and for the last line we use $\eps=\frac{1}{2}$. Applying this onto \eqref{4.32} it follows that $\pare{\partial_t-\square_k}\eta v^2\leq 0$. Therefore, the estimate in Proposition \ref{4.3} holds since chaining $\eta$ by $\phi_+$ does not modify the estimate as long  $v$ is defined in the support of $\phi_+$.
\end{proof}

In the following estimate we assume that there is positive function $h(x,t)$ which satisfies
\begin{align}\label{f}
	\pare{\partial_t-\square_k}h\leq C(k,n)\mbox{ and }\norm{\nabla h}_k^2\leq C(k,n)h,\mbox{ on }M_t. 
\end{align}	

\begin{theorem}\label{T 4.4}
	Let $R>0$ such that $M_t=\set{x\in M_t:h(x,t)\leq R^2}$ is graph over a ball of radio $R$  in $[0,T]$. Then, for any $t_0\in[0,T] $ and $\theta\in [0,1]$, the estimate
	\begin{align*}
		\sup\limits_{M_t} H^2\leq \dfrac{c(k,n)}{(1-\theta)^2}\left(\frac{1}{t}+\frac{1}{R^2}\right)\sup\limits_{[0,t_0]}\sup\limits_{M_t}v^4.
	\end{align*}	
\end{theorem}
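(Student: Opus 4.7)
The plan is to adapt the Ecker--Huisken interior second-order estimate to this setting, exploiting the fact that the evolution equations \eqref{56} and \eqref{57} have the same algebraic form as their mean-curvature-flow analogues. Because $\lambda\in\Gamma_{k+1}$ with $k\geq 1$ forces $|\A|^2\leq H^2$ pointwise, it suffices to bound $H^2$ and transfer the estimate at the very end. I would test the evolution against
\[
\Phi(x,t)=t\,\zeta(x,t)^2\,H(x,t)^2\,f\!\bigl(v(x,t)^2\bigr),
\]
where $\zeta$ is a spatial cutoff built from $h$, for instance $\zeta=(R^2-h)_+/R$, so that $\Phi$ vanishes on $\{h=R^2\}$ and at $t=0$, and $f$ is chosen so that $2f(s)-2sf'(s)\leq -2f(s)$. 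The canonical choice is $f(s)=s/(\kappa-s)$ with $\kappa>\sup v^2$ fixed; this makes $f$ comparable from below to $v^2/\sup v^2$ and is the mechanism that will kill the bad $|\A|_k^2 H^2$ term arising from the evolution of $H^2$.

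Next I compute $(\partial_t-\square_k)\Phi$ via the product rule, using Lemma \ref{L 4.1} factor by factor. From \eqref{57} and concavity of $Q_k$ on $\Gamma_{k+1}$ one has $(\partial_t-\square_k)H^2\leq 2|\A|_k^2 H^2-2\norm{\nabla H}_k^2$ after dropping the non-positive Hessian-of-$Q_k$ term. The product rule applied to \eqref{56} yields $(\partial_t-\square_k)v^2=-2v^2|\A|_k^2-6\norm{\nabla v}_k^2$, and the hypothesis \eqref{f} gives $(\partial_t-\square_k)\zeta^2\leq C(1+R^{-2})$ together with $\norm{\nabla\zeta}_k^2\leq C\zeta/R^2$. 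Combining these, the structural choice of $f$ produces a negative leading term $-c\,\zeta^2 t H^2 f\,|\A|_k^2$, plus positive lower-order contributions of the form $(1+t/R^2)\zeta^2 H^2 f$, plus cross terms of schematic form $\pI{\nabla\zeta^2,\nabla(H^2f)}_k$ and $\pI{\nabla H^2,\nabla f}_k$.

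At an interior maximum $(x_0,t_0)$ the first-order condition $\nabla\Phi=0$ rewrites $\nabla H^2/H^2$ as a linear combination of $\nabla\zeta^2/\zeta^2$ and $(f'/f)\nabla v^2$; substituting this into the cross terms and absorbing via Young's inequality against $\norm{\nabla H}_k^2$, $\norm{\nabla v}_k^2$ and the coercive $|\A|_k^2 H^2$ term converts the parabolic maximum-principle inequality $(\partial_t-\square_k)\Phi(x_0,t_0)\geq 0$ into $\Phi(x_0,t_0)\leq C(k,n)(1+t_0/R^2)$. Restricting to the region $\{\zeta^2\geq(1-\theta)^2\}$ and combining with $f(v^2)\geq v^2/(2\kappa)$ and $v\geq 1$ gives the advertised
\[
\sup_{M_t}H^2\leq\dfrac{c(k,n)}{(1-\theta)^2}\Bigl(\tfrac{1}{t}+\tfrac{1}{R^2}\Bigr)\sup_{[0,t_0]}\sup_{M_t} v^4,
\]
after which $|\A|^2\leq H^2$ concludes.

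The main obstacle is the bookkeeping of the cross terms after substituting the first-order condition. In the isotropic mean-curvature setting they collapse cleanly under Cauchy--Schwarz on $|\nabla\cdot|^2$, whereas here every contraction is weighted by $Q_{k,ij}$, so one must repeatedly use the non-negativity of $\partial Q_k/\partial A_{ii}$ (Lemma \ref{L 3.1}) together with the concavity bound that lets us discard the $\partial^2 Q_k$ term in \eqref{57}. A secondary subtlety is ensuring $|\A|_k^2\geq c(k,n)H^2$ so that the absorption in the final inequality really produces a negative $H^2$; this follows from positivity of the smallest eigenvalue of $Q_{k,ij}$ on $\Gamma_{k+1}$ combined with the pinching $|\A|^2\leq H^2$, but must be tracked carefully because the constant $c(k,n)$ here controls the size of $(1-\theta)^{-2}$ in the final bound.
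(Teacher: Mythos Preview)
Your proposal is correct and follows essentially the same route as the paper: the test function $t\,(R^2-h)_+^2\,H^2\,\varphi(v^2)$ with $\varphi(s)=s/(1-as)$, the evolution identities \eqref{56}--\eqref{57} together with concavity of $Q_k$ to drop the Hessian term, and Young's inequality to absorb the cross terms before invoking the maximum principle on $G=t\eta g$. The paper actually invokes the \emph{upper} bound $|\A|_k^2\le (n-k)H^2$ at the step you single out, so your instinct that the comparison between $|\A|_k^2$ and $H^2$ is the delicate point is well placed; aside from that, the two arguments are the same.
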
	
The proof is very similar to the given in \cite{ecker_huisken_1991} for the Mean Curvature Flow. For completeness we give all the details here.

\begin{proof}
	Let $\varphi$ a real function to be chosen, and we consider 
	\begin{align*}
		\pare{\partial_t-\square_k}\pare{H^2\varphi(v^2)}=&2H\varphi\pare{\partial_t-\square_k}H-2\varphi\norm{\nabla H}_k^2+2vH^2\varphi'\pare{\partial_t-\square_k}v
		\\
		&-2H^2\varphi'\norm{\nabla v}_k^2-2\pI{\nabla H^2,\nabla\varphi}_k-4H\varphi''v^2\norm{\nabla v}_k^2.
	\end{align*}
Then, by replacing the above equation with equations \eqref{56} and \eqref{57} together with the concavity of $Q_k$, it follows
\begin{align}\label{4.4}
	\pare{\partial_t-\square_k}H^2\varphi(v^2)\leq&  2H^2\varphi|\A|_k^2-2\varphi\norm{\nabla H}_k^2-2v^2H^2\varphi'|\A|_k^2
	\\ \notag
	&-6H^2\varphi'\norm{\nabla v}_k^2-2\pI{\nabla H^2,\nabla\varphi}_k-4H\varphi''v^2\norm{\nabla v}_k^2.
\end{align}
On one hand, we have for the term
\begin{align*}
-2\pI{\nabla H^2,\nabla\varphi}_k&=-\varphi^{-1}\pI{\nabla(H^2\varphi),\nabla\varphi}_k+H^2\varphi^{-1}\norm{\nabla \varphi}_k^2-4H\varphi'v\pI{\nabla H,\nabla v }_k
\\
&\leq-\varphi^{-1}\pI{\nabla(H^2\varphi),\nabla\varphi}_k+6H^2\varphi^{-1}(\varphi')^2v^2\norm{\nabla v}_k^2+2\varphi\norm{\nabla H}_k^2.	
\end{align*}	
In the last line we use Young Inequality with the last term in the right side of the first line. Then, by substituting the last equation onto  \eqref{4.4}, it follows 
\begin{align}
	\pare{\partial_t-\square_k}H^2\varphi(v^2)\leq &2|\A|_k^2H^2(\varphi-v\varphi')-\varphi^{-1}\pI{\nabla H^2\varphi,\nabla \varphi}_k
	\\ \notag
	&-\norm{\nabla v}_k^2H^2\pare{6\varphi'(1-\varphi'\varphi^{-1} v^2)+4\varphi''v^2}.
\end{align}	
Now we let $g=H^2\varphi(v^2)$ and we choose $\varphi(x)=\frac{x}{1-ax}$, where $a$ is still to be chosen. In addition, since $|A|_k^2\leq (n-k)H^2$, it follows
\begin{align}\label{4.41}
	\pare{\partial_t-\square_k}g&\leq 2(n-k)\varphi^{-2}(\varphi-v^2\varphi')g^2-\varphi^{-1}\pI{\nabla g,\nabla\varphi}_k
	\\ \notag
	&-g\norm{\nabla v}_k^2\pare{6\varphi'(1-\varphi\varphi'v^2)+4\varphi''v^2},
\end{align}	
and 
\begin{align*}
	\begin{cases}
		&\varphi-v^2\varphi'=-a\varphi^2,
		\\		
		&\varphi^{-1}\pI{\nabla g,\nabla\varphi}_k=2\varphi v^{-3}\nabla v,
		\\
		&\pare{6\varphi'(1-\varphi'\varphi^{-1} v^2)+4\varphi''v^2}=\dfrac{2a}{(1-ax)^2}.
	\end{cases}	
\end{align*}	
Consequently, by substituting these equations onto Equation \eqref{4.41}, we have
\begin{align}\label{4.42}
	\pare{\partial_t-\square_k}g&\leq -2(n-k)ag^2-2\varphi v^{-3}\pI{\nabla g,\nabla\varphi}_k-\dfrac{2ag}{(1-av^2)^2}\norm{\nabla v}_k^2.
\end{align}	
On the other hand, we consider $\eta(x,t)=(R^2-f(x,t))^2$. Then, the evolution equation for $\eta$  satisfies
\begin{align}\label{4.43}
	\pare{\partial_t-\square_k}\eta\leq 2C(k,n)R^2-2\norm{\nabla f}_k^2,
\end{align}	
 recall that we are using Equation \eqref{f}. 
 \newline
Furthermore, it follows that 
 \begin{align}\label{4.44}
 	\pare{\partial_t-\square_k}g\eta=\eta\pare{\partial_t-\square_k}g+g\pare{\partial_t-\square_k}\eta-2\pI{\nabla\eta,\nabla g}_k.
 \end{align}	
 Note that the term
 \begin{align*}
 	-2\pI{\nabla\eta,\nabla g}_k=-2\eta^{-1}\pI{\nabla \eta,\nabla g\eta}_k+8g\norm{\nabla f}_k^2.
 \end{align*}
 Then, by substituting equations \eqref{4.42}, \eqref{4.43} with the this last equation onto Equation \eqref{4.44}, it follows
 \begin{align} \label{4.45}
 	&\pare{\partial_t-\square_k}g\eta
 	\\ \notag
 	&\leq -2a(n-k)g^2\eta-\dfrac{2ag\eta}{(1-av^2)^2}\norm{\nabla v}_k^2+2C(k,n)gR^2
 	\\ \notag
 	&-2\eta^{-1}\pI{\nabla g\eta,\nabla \eta }+6g\norm{\nabla f}_k^2-2\varphi\eta v^{-3}\pI{\nabla g,\nabla v}_k.
 \end{align}	 
 Moreover, for the term
 \begin{align*}
 	-2\varphi\eta v^{-3}\pI{\nabla g,\nabla v}_k&=-2v^{-3}\varphi\pI{\nabla g\eta,\nabla v }_k+4g\pI{\eta^{1/2}\nabla v,v^{-3}\varphi\nabla r}_k
 	\\
 	&\leq-2v^{-3}\varphi\pI{\nabla g\eta,\nabla v }_k+ g\pare{2\eta\dfrac{2\norm{\nabla v}_k^2 }{\eps}+2\varphi^2 v^{-6}\eps\norm{\nabla f}_k^2}
 	\\
 	&=-2v^{-3}\varphi\pI{\nabla v,\nabla g\eta}_k+2g\eta\dfrac{a}{(1-av^2)^2}\norm{\nabla v}_k^2+\dfrac{g}{v^2a}\norm{\nabla f}_k ^2.
 \end{align*}	
 Note that in second line we use Young Inequality, in third line we use $\eps=\frac{(1-av^2)^2}{a}$	and  $\varphi^2v^{-6}\eps=v^{-2a}$. Then, by applying this equation onto  Equation \eqref{4.45}, it follows that
 \begin{align}\label{4.46}
 \pare{\partial_t-\square_k}g\eta \leq& 	-2a(n-k)\eta g^2+C(k,n)g\pare{ f\pare{\frac{1}{v^2a}+1}+R^2}
 \\ \notag
 &-2\pI{\nabla g\eta, v^{-3}\varphi\nabla v-2\eta^{-1}\nabla \eta }_k.
 \end{align} 
 Finally, we consider the test function $G=t\eta g$. Note that $G$ reaches it maximum at $t_0>0$ and in a interior point of $M_{t_0}$. Therefore,
 \begin{align}\label{4.47}
  0\leq \pare{\partial_t-\square_k}G\leq -2a g^2\eta t_0+C(k,n)gt_0\pare{ \frac{f}{v^2a}+f+R^2}+g\eta. 
 \end{align}
 Then, by multiplying  $\frac{\eta t_0}{2a(n-k)}$ to Equation \eqref{4.47}, it follows that 
 \begin{align*}
 	m(T)^2\leq m(T)\frac{C(k,n)}{2a(n-k)}\pare{t_0\pare{\frac{f}{v^2a}+f+R^2} +\eta},
 \end{align*}	
 where $m(T)=\sup_{[0,t_0]}\sup_{M_t} G$.  Consequently, it follows that 
 \begin{align*}
 	m(T)\leq C(k,n)v^2\pare{tR^2+R^4  }, 
 \end{align*}
 provided that $a>\frac12\inf_{[0,T]}\inf_{M_t}v^{-2}$. 
 \newline
 Therefore, since $\varphi(v^2)\geq v^{-2}$ and $\eta>(1-\theta)^2R^4$ in $\set{x\in M_t:f(x,t)\leq\theta R^2}$ for $\theta\in[0,1)$, the estimate from Theorem \ref{T 4.4} follows since 
 \begin{align*}
 	H^2\leq v^2\frac{m(T)}{R^4t(1-\theta)^2},\mbox{ holds on} M_t.
 \end{align*}	 
\end{proof}

Now we develop a similar estimate for Equation (\ref{4.1.}). Recall that we are assuming that $M_0=M$ is locally a graph over a hyperplane orthogonal to $w\in\Sp^n$.

\begin{lemma}\label{4.2..}
	Let $M$ be a  $Q_k$-translator. Then, we have the following equations at $p\in M$:
	\begin{align}
		\label{4.4.3}
		&\square_ku=Q_k\pI{\nu,w}, \mbox{ where } u=\pI{p,w},
		\\
		&\square_kr^2=2\pare{Q_k\pI{\nu,p} +Q_{k,ij}\delta^i_j}, \mbox{ where } r^2=|p|^2,
		\\\label{4.4.6}
		&\square_k H+\sum_{i=1}^nQ_{k,ab;cd}\nabla^ih_a^b\nabla_ih_c^d+|\A|_k^2H+\pI{\nabla H,\eps_{n+1}}=0,
		\\ \label{Q_k}
		&\square_k Q_k+|\A|_k^2Q_k+\pI{\nabla Q_k,\eps_{n+1}}=0,
		\\ \label{4.2.1.}
		&\square_k v-\pI{\nabla v,\eps_{n+1}}+|\A|^2_kv-2v^{-1}\norm{\nabla v}_k^2=0,\mbox{ where }v=\pI{\nu, w}^{-1}.
	\end{align}
\end{lemma}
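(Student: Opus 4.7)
The five identities split naturally into two groups. Equations (\ref{4.4.3}) and the one for $r^2$ involve the extrinsic position $p$ and will follow from direct computation on the fixed hypersurface $M$, while (\ref{4.4.6}), (\ref{Q_k}) and (\ref{4.2.1.}) involve only intrinsic geometric quantities and can be obtained either by twice-differentiating the translator equation $Q_k=\pI{\nu,w}$ or by specializing the evolution equations of Lemma \ref{L 4.1} to the translator setting.

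For $u=\pI{p,w}$, I would pick a local orthonormal frame $\set{e_i}$ on $M$ and use the Gauss formula to compute $\nabla_iu=\pI{e_i,w}$ and $\nabla_j\nabla_iu=h_{ij}\pI{\nu,w}$. Contracting with $Q_{k,ij}$ and invoking Euler's identity $Q_{k,ij}h_{ij}=Q_k$, which was already used in (\ref{4.1.10}) and is a direct consequence of the $1$-homogeneity of $Q_k$, gives $\square_k u=Q_k\pI{\nu,w}$. The argument for $r^2$ is identical in spirit: one finds $\nabla_j\nabla_ir^2=2\delta_{ij}+2h_{ij}\pI{p,\nu}$ by applying $\nabla$ to $2\pI{p,e_i}$ and using Gauss, and contracting with $Q_{k,ij}$ produces the stated formula.

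For (\ref{Q_k}), I would differentiate the translator equation twice: Weingarten expresses $\nabla_iQ_k$ in terms of the shape operator and the tangential part of $w$, and a further covariant differentiation, combined with the Gauss formula, the Codazzi identity $\nabla_lh_{ij}=\nabla_jh_{il}$, and the chain rule $Q_k^{li}\nabla_jh_{il}=\nabla_jQ_k$, converts $\square_k Q_k$ into the claimed combination of $\pI{\nabla Q_k,w}$ and $|\A|_k^2 Q_k$. Equation (\ref{4.2.1.}) then follows from (\ref{Q_k}) by applying the chain rule for $v=Q_k^{-1}$, which naturally produces the extra quadratic term $v^{-1}\norm{\nabla v}_k^2$. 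Equation (\ref{4.4.6}) is obtained by the same scheme, with the additional input of the Simons-type identity already apparent in (\ref{57}) to handle the second-order derivatives of $h_{ij}$. Equivalently, all three intrinsic identities can be recovered in one stroke by viewing the translator as a flow $M_t=M+tw$ and observing that any translation-invariant scalar $f$ satisfies $\partial_tf=-\pI{\nabla f,w}$ in the flow parametrization (the tangential reparametrization compensating the normal speed $Q_k\nu=w^{\perp}$), and then substituting this relation into equations (\ref{4.a}), (\ref{57}), (\ref{58}) and (\ref{56}) of Lemma \ref{L 4.1}.

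I expect the main obstacle to be the careful bookkeeping of sign conventions, specifically the Weingarten sign in $\nabla_i\nu=\pm h_i^je_j$ and the splitting $w=w^T+\pI{\nu,w}\nu$. Once these are fixed, each identity reduces to a short algebraic manipulation that only uses Codazzi, the chain rule, and the $1$-homogeneity of $Q_k$.
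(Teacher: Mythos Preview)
Your approach is correct and essentially matches the paper's: direct Gauss-formula computations for $u$ and $r^2$, a Simons-type commutation (the paper cites Huisken--Polden) combined with twice differentiating the translator equation to obtain $\square_k h_{ij}$ and then tracing for (\ref{4.4.6}), and Weingarten plus Codazzi for the $\pI{\nu,w}$ identity from which (\ref{Q_k}) and (\ref{4.2.1.}) follow. One small correction: your derivation of (\ref{4.2.1.}) via ``$v=Q_k^{-1}$'' is only valid when $w=\eps_{n+1}$; in general $v=\pI{\nu,w}^{-1}\neq Q_k^{-1}$, and the paper handles this by first establishing the analogue of (\ref{Q_k}) for $\pI{\nu,w}$ with arbitrary fixed $w$ (the same Weingarten/Codazzi computation you outline) and then applying the chain rule to its reciprocal---likewise, in your flow alternative the translation direction should be $\eps_{n+1}$, not $w$, so the substitution reads $\partial_t f=-\pI{\nabla f,\eps_{n+1}}$.
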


\begin{proof}
For the coefficient of the Second Fundamental Form of $M$ we have
	\begin{align*}
		\nabla_j\nabla_iQ_k=&\dfrac{\partial^2 Q_k}{\partial h_{cd}\partial h_{ab}}\nabla_jh_{cd}\nabla_ih_{ab}+\square_kh_{ij}+|\A|_k^2h_{ij}
	\\
	&-Q_kh_{il}h_{lj}+\dfrac{\partial Q_k}{\partial h_{ab}}\left(h_{ib}h_{am}h_{mj}-h_{im}h_{aj}h_{mb}\right).
	\end{align*}
	Here we use the equations from Theorem 2.1 given in \cite{huisken_polden_1996}.
	\newline
	Moreover, by translating soliton Equation (\ref{I.3}), it follows that
	\begin{align*}
		\nabla_j\nabla_iQ_k&=-\pI{\nabla h_{ij},\eps_{n+1}}-h_{il}h_{jl}Q_k.
	\end{align*}
  Then, after combining both equations  for $\nabla_j\nabla_i Q_k$, we obtain
	\begin{align*}
		\square_kh_{ij}=& -\dfrac{\partial^2 Q_k}{\partial h_{cd}\partial h_{ab}}\nabla_jh_{cd}\nabla_ih_{ab}
		\\
		&-|\A|^2_kh_{ij}-\pI{\nabla h_{ij},\eps_{n+1}}-\dfrac{\partial Q_k}{\partial h_{ab}}\left(h_{ib}h_{am}h_{mj}-h_{im}h_{aj}h_{mb}\right).
	\end{align*}
  Consequently, Equation \eqref{4.4.6} follows by taking trace in the above equation.
\newline
For the function $\pI{\nu,w}$, it follows that 
\begin{align*}
\nabla_i\pI{\nu,w}=-h_{il}\pI{e_l,w}\mbox{ and }\nabla_j\nabla_i\pI{\nu,w}=-\nabla_lh_{ij}\pI{e_l,w}-h_{il}h_{lj}\pI{\nu,w},
\end{align*}
note that we also use the Codazzi Equations in the last line. Therefore, it holds 
\begin{align}\label{nu,w}
	\square_k\pI{\nu,w}+|\A|_k^2\pI{v,w}+\pI{\nabla\pI{\nu,w},\eps_{n+1}}=0.
\end{align}
Finally, for  $v$, we have
\begin{align*}
	\nabla v=-v^{2}\pI{\nu,w},\mbox{ and }\square_kv=-v^2\square_k\pI{\nu,w}-2v^{-1}\norm{\nabla v}_k^{2}.
\end{align*}
Then, by substituting the above equations with Equation \eqref{nu,w}, Equation \eqref{4.2.1.} holds. Note that Equation equations \eqref{Q_k}  follows by taking $w=\eps_{n+1}$.
\end{proof}

Now we derive an Ecker Huisken interior estimate for $Q_k$-translators with principal curvature vector $\lambda\in \Gamma_{k+1}$. As in the parabolic case, we assume that there is a positive function $h(x)$ which satisfies
\begin{align}\label{f1}
	|\square_kh|\leq C(k,n)\mbox{ and }\norm{\nabla h}_k^2,|\nabla h|^2\leq C(k,n)h,\mbox{ on }M. 
\end{align}	

\begin{theorem}\label{T 4.7}
	Let $R>0$ such that $\set{x\in M:h(x)\leq R^2}$ is a ball of radio $R>0$ in the hyperplane orthogonal to $w$. Then, for any  $\theta\in [0,1]$, the estimate
	\begin{align}\label{H}
		 H^2\leq \dfrac{c(k,n)}{(1-\theta)^2}\left(1+\frac{1}{R^2}\right)\sup\limits_{\set{h\leq\theta R^2}}v^4.
	\end{align}	
\end{theorem}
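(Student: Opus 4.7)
The plan is to imitate the proof of Theorem \ref{T 4.4}, suppressing the time factor: the test function becomes $G = \eta\, g$ with $g = H^2\varphi(v^2)$, $\varphi(x) = x/(1-ax)$ for a parameter $a > 0$ to be chosen, and $\eta = (R^2 - h)^2$ the Ecker--Huisken cutoff on the graphical region $\{h \leq R^2\}$. Since $\eta$ vanishes on $\{h = R^2\}$, $G$ attains its maximum at an interior point $x_0$, where $\nabla G(x_0) = 0$, $\square_k G(x_0) \leq 0$, and consequently $\pI{\nabla G(x_0), \eps_{n+1}} = 0$; this last identity is what will neutralise the translator drift terms automatically.

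The main calculation replicates the parabolic computation using equations \eqref{4.4.6} and \eqref{4.2.1.} of Lemma \ref{4.2..} in place of \eqref{57} and \eqref{56}. I would discard the non-positive concavity contribution $-Q_{k,ab;cd}\nabla^i h_a^b\nabla_i h_c^d$, invoke $|\A|_k^2 \leq (n-k) H^2$ (the hypothesis $k \geq 1$ is used only here), and exploit the algebraic identity $v^2\varphi'(v^2) - \varphi(v^2) = a\varphi(v^2)^2$ for our specific $\varphi$ to produce the good term $-2a(n-k) g^2$. Cross gradient terms $\pI{\nabla H, \nabla v}_k$ are handled by Young's inequality and absorbed into the Kato-type combination $\bigl(6\varphi'(1-\varphi'\varphi^{-1}v^2)+4\varphi'' v^2\bigr)\|\nabla v\|_k^2$, exactly as in the parabolic derivation. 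Multiplying by $\eta$, using \eqref{f1} to bound $|\square_k \eta| \leq C(k,n) R^2$ together with $\|\nabla\eta\|_k^2$ and the Euclidean $|\nabla\eta|^2$ by $C(k,n) R^2 \eta$ (the latter is required to estimate $\pI{\nabla\eta, \eps_{n+1}}$ by Cauchy--Schwarz), and absorbing $2\pI{\nabla\eta, \nabla g}_k$ via the identity $\nabla g = -\eta^{-1} g \nabla \eta$ that holds at $x_0$, one obtains at the critical point a quadratic inequality of the form
\begin{align*}
2a(n-k)\,\eta\, g \leq C(k,n)\bigl(R^2 + 1 + a^{-1}v^{-2}\bigr).
\end{align*}
Choosing $a > \tfrac12 \inf_M v^{-2}$ and restricting to $\{h \leq \theta R^2\}$, where $\eta \geq (1-\theta)^2 R^4$ and $\varphi(v^2) \geq v^{-2}$, gives $H^2 \leq v^2 g \leq C(k,n)(1-\theta)^{-2}(R^{-2} + R^{-4}) v^4$, which is \eqref{H}.

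The principal obstacle I anticipate is the bookkeeping of the two translator drift contributions $\pI{\nabla H, \eps_{n+1}}$ and $\pI{\nabla v, \eps_{n+1}}$ that appear in \eqref{4.4.6} and \eqref{4.2.1.}: they do not combine algebraically when one differentiates $g = H^2\varphi(v^2)$ and assembles $\square_k g$, so a residual drift of the form $\alpha \pI{\nabla H, \eps_{n+1}} + \beta \pI{\nabla v, \eps_{n+1}}$ survives. The resolution, and the technical heart of the argument, is to carry these terms through every calculation and use at the maximum of $G$ the identity $\pI{\nabla g, \eps_{n+1}} = -g\,\eta^{-1}\pI{\nabla\eta, \eps_{n+1}}$ (a consequence of $\nabla G(x_0) = 0$); this trades each drift on $g$ for a boundary contribution on $\eta$, which is controlled by the Euclidean bound $|\nabla h|^2 \leq C(k,n) h$ in \eqref{f1}. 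The one structural difference with the parabolic argument is that the ``$1/t$'' term, which there arose from $\partial_t(t\eta g) = \eta g + t\,\partial_t(\eta g)$, is absent here and is replaced by the constant ``$1$'' generated by $|\square_k h| \leq C(k,n)$, which is precisely the source of the ``$1 + R^{-2}$'' shape of \eqref{H}, as opposed to the ``$t^{-1} + R^{-2}$'' of Theorem \ref{T 4.4}.
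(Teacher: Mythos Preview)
Your approach is essentially the same as the paper's: the same test function $G=\eta g$ with $g=H^2\varphi(v^2)$, $\varphi(x)=x/(1-ax)$, $\eta=(R^2-h)^2$, the same use of concavity of $Q_k$, the same Young/absorption manoeuvres copied from the parabolic Theorem~\ref{T 4.4}, and the same exploitation of $\nabla G(x_0)=0$ at the interior maximum. Two small remarks are worth making.

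First, your ``principal obstacle'' is not actually an obstacle. Because the translator drift enters \emph{both} \eqref{4.4.6} and \eqref{4.2.1.} as a first-order term of the same type, the product rule gives that the drift contributions in $-\square_k g$ assemble exactly into $-\pI{\nabla g,\eps_{n+1}}$ (this is what the paper records as the replacement of \eqref{4.42}), and after multiplying by $\eta$ one obtains $-\pI{\nabla G,\eps_{n+1}}$ plus a term $g\,\pI{\nabla\eta,\eps_{n+1}}$; the paper then uses $\nabla G(x_0)=0$ just as you do. So your more cautious plan of carrying $\alpha\pI{\nabla H,\eps_{n+1}}+\beta\pI{\nabla v,\eps_{n+1}}$ through and invoking $\nabla G(x_0)=0$ at the end is correct but unnecessary: the terms combine cleanly, which is how the paper writes it.

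Second, your final quadratic inequality yields $H^2\leq C(k,n)(1-\theta)^{-2}(R^{-2}+R^{-4})\sup v^4$, which is not literally \eqref{H}; the stated bound has the factor $(1+R^{-2})$. Your exponent bookkeeping drops a contribution of order $\eta$ (equivalently $R^4$) on the right-hand side that in the paper's accounting produces the leading ``$1$''. This does not affect correctness for the downstream application (Corollary~\ref{C 48} only needs a bound depending on $k,n,R$), but you should either reconcile the exponents or state the estimate you actually prove. Also, the hypothesis $k\geq 1$ is not needed for $|\A|_k^2\leq (n-k)H^2$ itself; it is used only later, in Corollary~\ref{C 48}, to pass from $H^2$ to $|A|^2$.
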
	
\begin{proof}
	The proof is very similar to the given in \ref{T 4.4}. Therefore, we only point put the main differences. We consider the test function 
	\begin{align*}
		G(x)=H^2\varphi(v^2)\eta(h),
	\end{align*}
where $\varphi(x)=\dfrac{x}{1-ax}$, $a=\frac{1}{2}\inf v^{-2}$, $\eta(x)=(R^2-h)^2$ and $h$ satisfies \eqref{f1}. 
\newline
First, we note that Equation \eqref{4.42}, is replaced by
\begin{align*}
	-\square_kg&\leq -2(n-k)ag^2-2\varphi v^{-3}\pI{\nabla g,\nabla\varphi}_k-\dfrac{2ag}{(1-av^2)^2}\norm{\nabla v}_k^2-\pI{\nabla g,\eps_{n+1}},
\end{align*}	
where $g=H^2\varphi$. Then, Equation \eqref{4.43}, is replaced by
\begin{align*}
-\square_k\eta\leq 2C(k,n)R^2-2\norm{\nabla h}_k^2.
\end{align*}
Consequently, by combining the above equations, Equation \eqref{4.47} is replaced by
\begin{align*}
-\square_kG\leq& \pI{\nabla G,-2\eta\nabla \eta-2\varphi\nabla }_k+\pI{\nabla G,\eps_{n+1}}+6g\norm{\nabla h}_k^2+\dfrac{g}{av^2}
\\
&+G\pare{ 2C(k,n)R^2-2C(k,n)ag-4|\nabla h|^2}.
\end{align*}	
Finally, since $G$ reaches it maximum at an interior point of $\set{h\leq\theta R^2}$, it follows that 
\begin{align*}
	m\leq C(k,n)\frac{R^4}{a}\left(1+\frac{1}{R^2}\pare{\frac{1}{av^2}+1}\right),
\end{align*}	
here we use that $\eta\leq R^4$, $h\leq R^2$, $av^2<2^{-1}$ and $m=\sup\limits_{\set{h\leq \theta R^2}}G$. Note that the estimates for $H^2$ follows since $\varphi\geq 1$ and $\eta\geq (1-\theta)^2R^4$. 
\end{proof}
Now we establish the curvature interior estimates for $Q_k$-translators.
\begin{corollary}\label{C 48}
Let $M$ be a $Q_k$-translator such that $\lambda\in\Gamma_{k+1}$ with $k\geq 1$. Then, for  $p\in M$, $|A|\leq C(k,n,R)$ in $B(p,R)$.
\end{corollary}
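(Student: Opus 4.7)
The argument combines three ingredients: the gradient estimate of Theorem \ref{T 34}, the second-order estimate of Theorem \ref{T 4.7}, and the pointwise inequality $|\A|^2\leq H^2$ which holds on $\Gamma_{k+1}$ when $k\geq 1$. The chain of implications will be: local graph representation plus Theorem \ref{T 34} yields a uniform bound on $v=\pI{\nu,w}^{-1}$; this bound plus Theorem \ref{T 4.7} yields a bound on $H$; the inequality $|\A|^2\leq H^2$ then gives the desired bound on $|\A|$.

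Without loss of generality take $p=0$, and let $w\in\Sp^n$ be the soliton direction from \eqref{I.3}. Since $\lambda\in\Gamma_{k+1}$ forces $Q_k>0$, the translator equation gives $\pI{\nu,w}=Q_k>0$ on $M$. In particular $M$ is a local graph over any hyperplane orthogonal to $w$, and the test function $v=\pI{\nu,w}^{-1}$ appearing in Lemma \ref{4.2..} is globally defined on $M$.

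As the auxiliary function required by Theorem \ref{T 4.7} I take the squared planar distance $h(x)=|x-\pI{x,w}w|^2$, whose sublevel sets $\set{h\leq R^2}$ project exactly onto balls of radius $R$ in $w^\perp$. Using Lemma \ref{4.2..}, the trace identity \eqref{2.5.1} together with Corollary \ref{C 2.7} (which yield $\sum_iQ_{k,ii}\leq n-k$), and $0<Q_k\leq 1$, one checks by direct computation that $h$ satisfies \eqref{f1} with constant depending only on $k,n,R$. To bound $v$, fix $q\in M\cap\set{h\leq (R/2)^2}$ and write $M$ near $q$ as a graph $x_{n+1}=u(x')$ over a ball $B(q',r)\subset w^\perp$; the oscillation of $u$ on this ball is controlled by $R$, so Theorem \ref{T 34} applied in the corresponding setting yields $|Du(q')|\leq C(k,n,R)$, hence $v(q)\leq C(k,n,R)$. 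Inserting this into Theorem \ref{T 4.7} and using $|\A|^2\leq H^2$ gives $|\A|\leq C(k,n,R)$ on $M\cap\set{h\leq (R/4)^2}$, which after adjusting constants proves the statement.

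The main technical subtlety is obtaining a uniform radius $r$ for the local graph representation used in the gradient estimate, independent of the reference point $q$. This follows from the transversality $\pI{\nu,w}>0$ together with a standard covering argument applied to the compact set $M\cap\overline{B(0,R)}$; the assumption $\lambda\in\Gamma_{k+1}$ is precisely what ensures this transversality holds globally on $M$, so that the gradient estimate can be applied at every point in the region of interest.
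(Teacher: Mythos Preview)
Your proof is correct and follows essentially the same route as the paper's. Your auxiliary function $h(x)=|x-\pI{x,w}w|^2$ is exactly the paper's choice $h=r^2-u^2$ (the paper writes ``$h^2=r^2-u^2$'' but then treats it as $h$, and writes $\nabla h=2(p-w)^\top$ where $2(p-uw)^\top$ is meant); both then verify \eqref{f1}, feed this into Theorem~\ref{T 4.7}, bound $v$ via Theorem~\ref{T 34}, and finish with $|\A|^2\leq H^2$.

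Two small points where you are in fact more careful than the paper: you make explicit that $w$ should be the soliton direction (so that $\pI{\nu,w}=Q_k>0$ and $v$ is defined), and you correctly note that the constant in \eqref{f1} for $|\square_k h|$ picks up an $R$-dependence through the term $2Q_k\pI{\nu,p-uw}$, which the paper records as $C(k,n)$. Your closing paragraph on the graph-radius subtlety is an honest acknowledgment of a point the paper simply elides; your compactness/covering resolution is adequate at the level of a sketch, though as stated it would produce a constant depending on $M$ rather than only on $(k,n,R)$ --- the paper's proof has the same implicit dependence.
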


\begin{proof}
Let  $h^2=r^2-u^2$, where $r^2=|p|^2$ and $u=\pI{p,w}$.  Note that $h$ measures the distance of $M$ to the hyperplane orthogonal to $w$. In addition, we chose a normal frame $\set{e_i}$ of $T_pM $ at $p$. Then, by taking covariant derivatives on $u$, we have $\nabla_i u=\pI{e_i,w}$ and $\nabla_j\nabla_i u=h_{ij}\pI{\nu,w}$. Therefore,
	\begin{align*}
	\nabla u=w^{\top} \mbox{ and } \square_k u=Q_k\pI{\nu,w}.
	\end{align*}
Likewise, we have $\nabla_ir^2=2\pI{e_i,p}$ and $\nabla_j\nabla_i r^2=2\pare{h_{ij}\pI{\nu,p}+\delta_i^j}$. Consequently,
	\begin{align*}
 \square_k r^2=2\pare{Q_k\pI{\nu,p} +Q_{k,ij}\delta^i_j}.		
	\end{align*}	
Then, by combining the above equations for $h$, it follows that 
	\begin{align*}
		\nabla h=2(p-w)^{\top},\mbox{ and }\norm{\nabla h}_k^2,|\nabla h|^2\leq C(k,n)h. 
	\end{align*}	
And, for the second derivatives it follows that
	\begin{align*}
		\square_k h=2Q_k(p-uw)^{\perp}+2Q_{k,ij}\delta_{ij}-2\norm{w^{\top}}^2_k.
	\end{align*}
Consequently,  $\abs{\square_k h}\leq C(k,n)$.
	\newline
As a result, we may apply Theorem \ref{T 4.7} on $\set{h^2\leq R^2}$, to obtain
	\begin{align*}
		H^2\leq C(k,n,R)\sup_{B(p,R)}v^4.
	\end{align*}
Finally, by the gradient estimate from Theorem\ref{T 34}, we have that $v\leq C(k,n,R)$ at $B(p,R)$. Therefore, Corollary \ref{C 48} will follows by $|A|^2\leq H^2$.  
\end{proof}

\begin{bibdiv}
\begin{biblist}

\bib{andrews_2004}{article}{
   author={Andrews, B.},
   title={Pinching estimates and motion of hypersurfaces by curvature
   functions},
   journal={J. Reine Angew. Math.},
   volume={608},
   date={2007},
   pages={17--33},
   issn={0075-4102},
   review={\MR{2339467}},
   doi={10.1515/CRELLE.2007.051},
}

\bib{caffarelli_nirenberg_spruck_1988}{article}{
   author={Caffarelli, L.},
   author={Nirenberg, L.},
   author={Spruck, J.},
   title={Nonlinear second-order elliptic equations. V. The Dirichlet
   problem for Weingarten hypersurfaces},
   journal={Comm. Pure Appl. Math.},
   volume={41},
   date={1988},
   number={1},
   pages={47--70},
   issn={0010-3640},
   review={\MR{917124}},
   doi={10.1002/cpa.3160410105},
}

\bib{Choi-Daskalopoulos_2016}{article}{
   author={Choi, K.},
   author={Daskalopoulos, P.}
   title={The $Q_k$ flow on complete non-compact graphs},
   date={2016},
   journal={Preprint arXiv:1603.03453},
}

\bib{MR3312634}{article}{
	author={Colding, Tobias Holck},
	author={Minicozzi, William P., II},
	author={Pedersen, Erik Kj\ae r},
	title={Mean curvature flow},
	journal={Bull. Amer. Math. Soc. (N.S.)},
	volume={52},
	date={2015},
	number={2},
	pages={297--333},
	issn={0273-0979},
	review={\MR{3312634}},
	doi={10.1090/S0273-0979-2015-01468-0},
}

\bib{dieter_2005}{article}{
   author={Dieter, S.},
   title={Nonlinear degenerate curvature flows for weakly convex hypersurfaces}, 
   volume={22}, 
   doi={10.1007/s00526-004-0279-4}, 
   number={2}, 
   journal={Calculus of Variations},  
   year={2005},
   pages={229-251}
}

\bib{ecker_huisken_1989}{article}{
   author={Ecker, K.},
   author={Huisken, G.},
   title={Mean curvature evolution of entire graphs},
   journal={Ann. of Math. (2)},
   volume={130},
   date={1989},
   number={3},
   pages={453--471},
   issn={0003-486X},
   review={\MR{1025164}},
   doi={10.2307/1971452},
}

\bib{ecker_huisken_1991}{article}{
	author={Ecker, K.},
	author={Huisken, G.},
	title={Interior estimates for hypersurfaces moving
		by mean curvature },
	journal={Invent. math },
	volume={105},
	date={1991},
	number={3},
	pages={547--569},
}

\bib{hardy_littlewood_polya_1952}{book}{
   author={Hardy, G. H.},
   author={Littlewood, J. E.},
   author={P\'{o}lya, G.},
   title={Inequalities},
   note={2d ed},
   publisher={Cambridge, at the University Press},
   date={1952},
   pages={xii+324},
   review={\MR{0046395}},
}

\bib{Hoffman_Martin_white_2019}{article}{
	author={Hoffman, D.},
	author={Ilmanen, T.},
	author={Martín, F.},
	author={White, B.}
	title={Notes on translating solitons for Mean Curvature Flow},
	date={2019},
	journal={Preprint arXiv:1901.09101},
}

\bib{holland_2014}{article}{
 author = {Holland,  J.},
 journal = {Indiana University Mathematics Journal},
 number = {5},
 pages = {1281--1310},
 publisher = {Indiana University Mathematics Department},
 title = {Interior Estimates for Hypersurfaces Evolving by Their k-th Weingarten Curvature, and Some Applications},
 volume = {63},
 year = {2014}
}

\bib{huisken_polden_1996}{article}{
   author={Huisken, G.},
   author={Polden, A.},
   title={Geometric evolution equations for hypersurfaces},
   conference={
      title={Calculus of variations and geometric evolution problems},
      address={Cetraro},
      date={1996},
   },
   book={
      series={Lecture Notes in Math.},
      volume={1713},
      publisher={Springer, Berlin},
   },
   date={1999},
   pages={45--84},
   review={\MR{1731639}},
   doi={10.1007/BFb0092669},
}

\bib{huisken-sinestrari_1999}{article}{
   author={Huisken, G.},
   author={Sinestrari, C.},
   title={Convexity estimates for mean curvature flow and singularities of
   mean convex surfaces},
   journal={Acta Math.},
   volume={183},
   date={1999},
   number={1},
   pages={45--70},
   issn={0001-5962},
   review={\MR{1719551}},
   doi={10.1007/BF02392946},
}

\bib{ilmanen_1994}{article}{
   author={Ilmanen, T.},
   title={Elliptic regularization and partial regularity for motion by mean
   curvature},
   journal={Mem. Amer. Math. Soc.},
   volume={108},
   date={1994},
   number={520},
   pages={x+90},
   issn={0065-9266},
   review={\MR{1196160}},
   doi={10.1090/memo/0520},
}

\bib{Krylov_1987}{book}{
   author={Krylov, N. V.},
   title={Nonlinear elliptic and parabolic equations of the second order},
   series={Mathematics and its Applications (Soviet Series)},
   volume={7},
   note={Translated from the Russian by P. L. Buzytsky [P. L. Buzytski\u{\i}]},
   publisher={D. Reidel Publishing Co., Dordrecht},
   date={1987},
   pages={xiv+462},
   isbn={90-277-2289-7},
   review={\MR{901759}},
   doi={10.1007/978-94-010-9557-0},
}

\bib{mariel_2019}{article}{
	author={S\'aez, M.},
	title={Private comunication},
	journal={},
	volume={},
	date={},
	pages={},
	issn={},
	review={},
	doi={},
}

\bib{sheng_urbas_wang_2004}{article}{
   author={Sheng, W.},
   author={Urbas, J.},
   author={Wang, X.},
   title={Interior curvature bounds for a class of curvature equations},
   journal={Duke Math. J.},
   volume={123},
   date={2004},
   number={2},
   pages={235--264},
   issn={0012-7094},
   review={\MR{2066938}},
   doi={10.1215/S0012-7094-04-12321-8},
}

\end{biblist}
\end{bibdiv}

\end{document}